\theoremstyle{theorem}
\newtheorem{theorem}{Theorem}
\newtheorem{proposition}{Proposition}
\newtheorem{lemma}{Lemma}
\theoremstyle{definition}
\newtheorem{definition}{Definition}
\newtheorem{remark}{Remark}
\newtheorem{example}{Example}
\def\rh{\hat{\rho}}
\def\drh{\Delta_{\hat{\rho}}}
\def\dhat{\hat{d}}
\def\drho{{\Delta_\rho}}
\def\srp{$s,r$-path}
\def\srt{$s,r$-trail}
\def\srw{$s,r$-walk}
\def\Prob{{\text{Prob}}}
\def\pprime{{\prime\prime}}
\def\ppprime{{\prime\prime\prime}}
\begin{document}

\title{\bf Expected reliability of communication protocols}

\date{}

\author{Andr\'e K\"undgen}\address{Department of Mathematics, California State University San Marcos, San Marcos, CA 92096}\email{akundgen@csusm.edu}\thanks{The first author was supported by ERC Advanced Grant GRACOL, project no. 320812 while visiting Denmark Technical University.}

\author{Janina Patno} 
\keywords{Network, communication protocol, two-terminal graph,  two-terminal reliability polynomial}

\maketitle

\begin{abstract}
We consider the problem of sending a message from a sender $s$ to a receiver $r$ through an unreliable network by specifying in a protocol what each vertex is supposed to do if it receives the message from one of its neighbors. A protocol for routing a message in such a graph is {\em finite} if it never floods $r$ with an infinite number of copies of the
message. The {\em expected reliability} of a given protocol is the probability that a message sent from $s$ reaches $r$ when
the edges of the network fail independently with probability $1-p$.

We discuss, for given networks, the properties of finite protocols with maximum expected reliability in the case when
$p$ is close to 0 or 1, and we describe networks for which no one protocol is optimal for all values of $p$. In general, finding
an optimal protocol for a given network and fixed probability is challenging and many open problems remain.
\end{abstract}

\section{Introduction}\label{sec:intro}

All graphs $G$ in this paper will be undirected, simple (no loops or multiple edges), and have two distinct vertices identified as $s$ (the sender) and $r$ (the receiver). Our goal is to pass a message from $s$ to $r$ along the edges of $G$, where we will assume that edges may fail, but vertices do not. We will also assume that vertices are memoryless and have no information about which edges have failed and which are alive, but rather, will pass the message along based on a set of instructions that are given before we know which edges fail. More formally, an {\em instruction} is any triple $uvw$ where $u,w$ are different neighbors of $v$ and the idea is that if $v$ receives a message from $u$, then it passes it on to $w$. (We require that $u\neq w$, since there is no point in sending a message back where it came from. Also observe that $uvw$ and $wvu$ are different instructions, whereas for edges we have $uv=vu$ as usual.) A {\em protocol} $A$ is a set of instructions. For a protocol $A$ of a graph $G$ we now try to send a message from $s$ to $r$ by sending one copy of the message from $s$ to everyone of its neighbors, and whenever an intermediate vertex receives the message, it passes it on as described by $A$. 

There are several considerations of what makes a good protocol. A very basic one is that we do not want to flood the receiver, that is, we do not want $r$ to receive infinitely many copies of the message. This is what would usually happen if no edge fails and every intermediate vertex simply sends the message to every neighbor each time it receives it. This basic consideration is studied in~\cite{FKPR}. Another consideration is that we would like our protocol to have a certain robustness, that is the failure of only few edges does not interrupt communication. This concept was suggested to the first author by Lov\'asz and is studied in~\cite{KPR}. A second approach suggested by Lov\'asz~\cite{KPR} is to study the probability that a message sent from $s$ will reach $r$ when the edges of $G$ only survive with some probability $p$. This approach was first studied in the Master's thesis of the second author~\cite{Pat}. In this paper we build on the ideas from~\cite{FKPR,KPR,Pat} to investigate the probabilistic setting.
\section{The basic model}\label{sec:basic}

To study this communication model we need a few basic definitions that are consistent with those used in~\cite{FKPR,KPR,Pat,Wes}. A \emph{$u,v$-walk of length $k$} in a graph $G$ is a sequence $v_0, v_1, v_2, ..., v_{k-1}, v_k$ of vertices such that $v_0=u$, $v_k=v$ and  $v_{i-1},v_i$ are adjacent for all $i$ with $1 \leq i \leq k$. We say that an instruction $uvw$ is {\em contained} in this walk if there is an index $i$ such that $u=v_{i-1}, v=v_i, w=v_{i+1}$. 
A \emph{trail} is a walk such that if $v_i = v_j$ for $i \neq j$, then $v_{i+1} \neq v_{j+1}$. Thus, each edge $uv$ can be used at most twice in a trail: once as $u,v$ and once as $v,u$. (This is a slightly nonstandard use of this term.)
A walk (trail) is $closed$ if the endpoints, $v_0$ and $v_{k}$, of the walk (trail) are the same. 
A \emph{path} is a walk \mbox{$v_0, v_1, v_2, ..., v_{k-1}, v_k$} such that $v_0, v_1, ..., v_{k-1}, v_k$ are distinct vertices of $G$. 
For a protocol $A$, an {\em $A$-walk} is an \srw~such that every instruction it contains is in $A$. The concepts of {\em $A$-trail} and {\em $A$-path} are defined similarly. 

Using this notation it is now easy to see that $r$ receives the message exactly once for every $A$-walk whose edges do not fail. To have a good protocol we generally want there to be many $A$-walks, but we also do not want to flood the receiver with infinitely many messages when no edge fails. Thus we call a protocol $A$ {\em finite} if there are only finitely many $A$-walks. To characterize such protocols we call an instruction {\em (strongly) essential} for $A$ if it is contained in an $A$-walk (an $A$-path). If every instruction of $A$ is (strongly) essential, then we call $A$ a {\em (strongly) essential protocol}. One of the key lemmas of~\cite{FKPR} is that a protocol $A$ is finite if and only if it does not contain an {\em essential circuit}, that is a closed trail such that every instruction it contains (including $v_{k-1}v_0v_1$) is essential for $A$. 

The simplest protocol is the {\em Complete Forwarding Protocol} (CFP) $A^*=\{ uvw: uvw$ is contained in some \srp$\}$. By definition $A^*$ is strongly essential. Observe that there can be instructions that are contained in an \srw, but not in an \srp~and such instructions would not be in $A^*$.

\begin{figure}[ht]
\centerline{
\hbox{\xy /r3.8pc/:,
(0,0)*+{s}*\cir{}="B0",
(2,0),
{\xypolygon4"A"{~>{}}},
(4,0)*+{r}*\cir{}="B1",
"A0"*{\bullet}, "A1"*{\bullet},
"A2"*{\bullet}, "A3"*{\bullet},"A4"*{\bullet},
"A0"+<10pt,0pt> *{3},
"A1"+<2pt,-10pt> *{4},
"A2"+<-2pt,-10pt> *{1},
"A3"+<-2pt, 10pt> *{2},
"A4"+<2pt,10pt> *{5},
"A0";"A1"**@{-},
"A0";"A2"**@{-},
"A0";"A3"**@{-},
"A0";"A4"**@{-},
"A1";"A2"**@{-},
"A3";"A4"**@{-},
"B0";"A2"**@{-},
"B0";"A3"**@{-},
"B1";"A1"**@{-},
"B1";"A4"**@{-},
\endxy}}
\caption{$B_0$}\label{fig:B0}
\end{figure}

\begin{example}\label{B0}
The graph $B_0$ in Figure~\ref{fig:B0} has Complete Forwarding Protocol $A^*=\{s13, s14,$ $s23,$ $s25, 132, 134, 135, 143, 14r, 231, 234, 235, 253, 25r, 314, 325, 34r, 35r, 432, 435,   531, 534\}.$

Observe that $413\notin A^*$, even though there is an \srw~$s,2,3,4,1,3,5,r$. Every essential circuit must contain the edges of a cycle, but not $s$ or $r$. It is easy to see that $C=1,4,3,2,5,3,1$ is the only essential circuit of $A^*$ up to the choice of the starting point.
\end{example}

The graph $B_0$ shows that the CFP need not be finite. In fact the main result of~\cite{FKPR} is a characterization of the graphs for which the CFP is finite in terms of 10 forbidden minors, one of which is $B_0$. Any protocol $A$ with $A\subseteq A^*$ is called a {\em Partial Forwarding Protocol} (PFP).  An SPFP is a strongly essential PFP, that is a protocol $A$ in which every instruction is contained in an $A$-path. Our first lemma will imply that in general it suffices to study SPFP's.

\begin{lemma}\label{PFP lemma}
If $A$ is a protocol for a graph $G$, then there is a PFP $A'$ with the following properties:
\begin{enumerate}
\item The edge-set of every $A$-walk contains an $A'$-path.
\item The edge-set of every $A'$-walk contains an $A'$-path.
\item Every instruction in $A'$ is contained in an $A'$-path (and thus strongly essential). 
\item If $A$ is finite, then $A'$ is finite.
\end{enumerate}
\end{lemma}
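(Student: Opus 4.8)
The plan is to build $A'$ not as a subprotocol of $A$ but as a subprotocol of the CFP $A^*$, since even when $A$ is finite an $A$-walk $W$ need not contain any $A$-\srp: extracting an \srp~from $E(W)$ creates new ``corner'' instructions $v_{i-1}v_iv_{j+1}$ that, while always lying on some \srp~(hence in $A^*$), typically lie outside $A$. This already indicates that property (c) is the right normalization to aim for, and that since every instruction on an \srp~lies in $A^*$, any protocol satisfying (c) is automatically a PFP. Concretely I would set
$$A' := \{\, uvw : uvw \text{ lies on an } s,r\text{-path } P \text{ with } E(P)\subseteq E(W) \text{ for some } A\text{-walk } W\,\},$$
i.e.\ collect the instructions of every \srp~that is realized inside the edge set of a \emph{single} $A$-walk. (If this $A'$ is not already closed under the same operation, one iterates the construction with $A'$ in place of $A$; because the whole process lives inside the finite set $A^*$ it stabilizes after finitely many steps, and I take $A'$ to be the stable protocol.)

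Properties (a) and (c) then come essentially for free from the standard fact that the edge set of any \srw~contains an \srp. For (a): given an $A$-walk $W$, pick an \srp~$P$ with $E(P)\subseteq E(W)$; by construction every instruction of $P$ lies in $A'$, so $P$ is an $A'$-path inside $E(W)$. For (c): if $uvw\in A'$ it sits on such a $P$, and the same argument shows all instructions of $P$ lie in $A'$, so $P$ is an $A'$-path through $uvw$.

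Property (b) is the first place where real work is needed. I would prove a \emph{domination lemma}: the edge set of every $A'$-walk is contained in the edge set of a single $A$-walk. Granting this, (b) follows exactly as (a) did, applied to the containing $A$-walk. The subtlety is that an $A'$-walk may use shortcut instructions coming from several different $A$-walks, so one must argue that the stabilization step above has already absorbed all such combinations; this is precisely why the iterated (closure) definition of $A'$ is preferable to the one-shot version.

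The main obstacle is property (d). It is genuinely the heart of the lemma: the ``obvious'' rich construction --- take the CFP of the subgraph $H_A$ formed by all edges that occur in some $A$-walk --- satisfies (a)--(c) but can fail to be finite even when $A$ is, precisely because several $A$-walks may \emph{jointly} traverse all the edges of an essential circuit (as in $B_0$) although no single walk does. The realized-within-one-walk restriction in the definition of $A'$ is exactly what blocks this, and (d) is the assertion that this restriction always succeeds. I would prove it in contrapositive form using the characterization of~\cite{FKPR} that a protocol is finite if and only if it has no essential circuit: assuming $A'$ has an essential circuit $C$, I would lift $C$ to an essential circuit of $A$. Each instruction of $C$ lies, by construction, on an \srp~realized inside some $A$-walk, which pins down how $A$ traverses the two edges meeting at that instruction; the task is to align these local traversals along $C$ into one closed trail all of whose instructions are essential for $A$. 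I expect this alignment --- transferring the circuit at the level of instructions rather than merely edges, and ruling out that the edges of $C$ are only assembled from unrelated $A$-walks --- to be the technically hardest step, and the reason the single-walk realization must be built into $A'$ from the start.
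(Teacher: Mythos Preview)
Your overall architecture matches the paper's: build $A'$ from $s,r$-paths extracted from $A$-walks, get (a) and (c) immediately, obtain (b) by iterating the construction until it stabilizes inside $A^*$ (this is exactly what the paper does; your ``domination lemma'' is neither needed nor true in general, and once you iterate, property (a) applied with $A'$ in the role of $A$ \emph{is} property (b)), and prove (d) in contrapositive by lifting an essential circuit of $A'$ to one of $A$.

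The genuine gap is in (d), and it originates in your extraction rule. You admit \emph{every} $s,r$-path $P$ with $E(P)\subseteq E(W)$; the paper admits only paths obtained from $W$ by shortcutting at repeated vertices, so that each vertex $u_i$ of $P$ comes with indices $j\le j'$ in $W$ satisfying $v_j=u_i=v_{j'}$, $v_{j-1}=u_{i-1}$, $v_{j'+1}=u_{i+1}$. That structural guarantee is precisely what drives the lift: given an essential circuit $C'=u_0,\dots,u_k$ for $A'$, one replaces each $u_i$ by the closed subwalk $v_j,\dots,v_{j'}$ of its witnessing $A$-walk, and the concatenation is an essential circuit for $A$. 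Your weaker hypothesis does not supply this. For instance, if $W=s,a,b,c,a,d,e,b,f,r$ then $P=s,a,c,b,f,r$ has $E(P)\subseteq E(W)$, so your $A'$ contains the instruction $acb$; but $W$ traverses the relevant edges only as $b,c,a$, so there is no subwalk of $W$ at $c$ entered from $a$ and exited toward $b$. Hence your assertion that the witnessing walk ``pins down how $A$ traverses the two edges meeting at that instruction'' is false under the edge-set definition, and the alignment step you flag as hardest is not merely hard but unsupported by what you have set up. The paper's shortcut extraction is exactly the missing technical idea that makes the circuit lift go through.
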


\begin{proof}
Observe that every $A$-walk $W=v_0,v_1,\dots,v_k$ contains an \srp~$P=u_0,\dots, u_m$ such that for every $u_i$ we can find indices $j\le j'$ with $v_j=u_i=v_{j'}$, $u_{i-1}=v_{j-1}$ and $u_{i+1}=v_{j'+1}$. Moreover, there are only finitely many \srp s in $G$, so we can let the paths obtained in this way be $P_1,\dots,P_k$, and $W_i$ be some $A$-walk that contains $P_i$ in this way. Let
$A'=\{ uvw: uvw$ is contained in some $P_i\}$. $A'$ is a PFP, and since every $P_i$ is an $A'$-path, (a) and (c) clearly hold.

To prove (d), suppose $A'$ is not finite and there is an essential circuit $C'=(u_0,u_1,\dots, u_k)$ for $A'$. Every instruction $u_{i-1}u_iu_{i+1}$ in this circuit must be contained in some $P_i$. Now by construction the $A$-walk $W_i$ contains a subwalk $v_{j-1},v_j,\dots,v_{j'},v_{j'+1}$ that includes only instructions that are essential for $A$, and such that $u_i=v_j=v_{j'}$, $u_{i-1}=v_{j-1}$ and $u_{i+1}=v_{j'+1}$. Thus if in $C'$ we replace every vertex $u_i$ by the corresponding walk $v_j,\dots, v_{j'}$, then we obtain an essential circuit $C$ for $A$. Thus $A$ is not finite.

To prove (b) consider $A^\pprime$ obtained from $A'$ by repeating the same procedure, that is $A^\pprime=(A')'$. By (c) for $A'$ it follows that $A'\subseteq A^\pprime$, so that every $A'$-path is an $A^\pprime$-path. Thus (a) holds for $A^\pprime$ instead of $A'$, and (c) and (d) follow similarly with $A^\pprime$ instead of $A'$. If $A'$ does not satisfy (b), then we get that one of the paths $P_i$ in the construction of $A^\pprime$ from $A'$ is not an $A'$-path, so that $A^\pprime$ has an instruction that is not in $A'$, that is $A'\subset A^\pprime$. Repeating this procedure we get a strictly increasing sequence $A'\subset A^\pprime\subset A^\ppprime\subset\dots $ of protocols that satisfy (a,c,d). Since there are only finitely many instructions, this process terminates in a protocol satisfying all four conditions, which will be our $A'$.   
\end{proof}

\section{The probabilistic model}\label{sec:prob}

Suppose every edge of $G$ fails with probability $1-p$ and survives with probability $p$, where $p$ is usually fixed in (0,1), but for the purpose of this section $p$ need not be constant on $E(G)$. We define the {\em (expected) reliability} of a protocol $A$ for $(G,p)$, denoted by $\rho_A(G,p)$ or simply $\rho_A$ if $G$ and $p$ are clear from the context, to be the probability that a message sent from $s$ under protocol $A$ reaches $r$. Note that this is defined, whether $A$ is finite or not. More formally

\begin{definition}
Let $A$ be any protocol for $G$, and $p:E(G)\to(0,1)$. Then $\rho_A(G,p)$ and $\rho'_A(G,p)$ are the probability that the edges of some $A$-walk (respectively $A$-path) do not fail if every edge $e\in E(G)$ fails independently with probability $1-p(e)$. Moreover, $\rh(G,p)=\max\{\rho_A(G,p): A$ is finite$\}$.
\end{definition}

Note that $\rho'_A(G,p)\le\rho_{A}(G,p)$ and the former is usually easier to determine, but equality need not hold. Observe also that $\rh$ is well-defined since there are only finitely many protocols $A$, but that for different choices of $p$ this maximum might be achieved for different protocols $A$. Also note that the well studied {\em (two terminal)-reliability} $\rho (G,p)$ (see~\cite{Col,GM,Kel,MS,PB,YG}) is the probability that $s,r$ are in the same component of $(G,p)$. This is identical to the probability that some $s,r$-path survives in $(G,p)$, that is $\rho(G,p)=\rho'_{A^*}(G,p)=\rho_{A^*}(G,p)$. Lemma~\ref{PFP} immediately implies the following.

\begin{proposition}\label{PFP}
For every protocol $A$ there is a SPFP $A'$ such that $\rho_A(G,p)\le\rho_{A'}(G,p)=\rho_{A'}'(G,p)$ for every $p:E(G)\to(0,1)$. 
$\rho_A(G,p)=\rho_{A'}'(G,p)$ if and only if every $A'$-path is an $A$-path.
Moreover if $A$ is finite, then $A'$ is finite.
\end{proposition}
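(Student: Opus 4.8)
The plan is to take $A'$ to be exactly the PFP produced by Lemma~\ref{PFP lemma} and to verify each clause of the proposition from the four properties (a)--(d). Since property (c) says every instruction of $A'$ lies on an $A'$-path, $A'$ is strongly essential, and being a PFP it is therefore an SPFP; property (d) gives the final ``if $A$ is finite then $A'$ is finite'' clause directly. So the real work is in the probabilistic statements, and I would argue all of them at the level of edge-configurations: a random outcome is the set $S\subseteq E(G)$ of surviving edges, and $\rho_A$, $\rho_{A'}$, $\rho'_{A'}$ are the probabilities that $S$ contains the edges of, respectively, some $A$-walk, some $A'$-walk, some $A'$-path.

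For the chain $\rho_A\le\rho_{A'}=\rho'_{A'}$ I would feed surviving walks into these containment properties. Since a path is a walk, every surviving $A'$-path is a surviving $A'$-walk, giving $\rho'_{A'}\le\rho_{A'}$ for free. For the reverse inequality, if $S$ contains all edges of an $A'$-walk, then by (b) it contains the edges of an $A'$-path nested inside it, so $S$ is also counted by $\rho'_{A'}$; this yields $\rho_{A'}\le\rho'_{A'}$ and hence $\rho_{A'}=\rho'_{A'}$. Likewise, if $S$ contains all edges of an $A$-walk, then by (a) it contains an $A'$-path, so the ``some $A$-walk survives'' event is contained in the ``some $A'$-path survives'' event, giving $\rho_A\le\rho'_{A'}$.

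The substance of the proposition is the equality criterion, and here I would exploit that $p(e)\in(0,1)$, so every single configuration $S$ occurs with strictly positive probability. Because the event ``some $A$-walk survives'' is contained in the event ``some $A'$-path survives'', these two events have equal probability if and only if they are literally the same set of configurations, i.e. every configuration that realizes an $A'$-path already realizes an $A$-walk. This is what I would use to reduce the claim to the purely combinatorial statement: $\rho_A=\rho'_{A'}$ iff every $A'$-path is an $A$-path. The easy direction is immediate, since if every $A'$-path is an $A$-path then, a path being a walk, every surviving $A'$-path is a surviving $A$-walk and the two events coincide.

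The hard direction, which I expect to be the main obstacle, is the converse: assuming some $A'$-path $P$ is not an $A$-path, I must produce a configuration realizing $P$ but no $A$-walk, and the natural choice is to let exactly $E(P)$ survive. I then need the key structural claim that any $A$-walk all of whose edges lie in $E(P)$ must coincide with $P$ as a vertex sequence. This is where the simplicity of $P$ and the no-backtracking rule $u\neq w$ in an instruction $uvw$ do the work: an $A$-walk starts at $s$, which has a unique $P$-edge, so it must leave along it; at each internal vertex $u_i$ the only $P$-neighbors are $u_{i-1}$ and $u_{i+1}$, and since the instruction used cannot return to $u_{i-1}$ it must advance to $u_{i+1}$; thus the walk is pinned to traverse $P$ from $s$ to $r$. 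Hence the only candidate $A$-walk inside $E(P)$ is $P$ itself, and as $P$ is not an $A$-path this configuration realizes no $A$-walk, so it lies in the larger event but not the smaller one and $\rho_A<\rho'_{A'}$. Assembling the three probabilistic claims with the SPFP and finiteness observations completes the proof.
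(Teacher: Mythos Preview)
Your proposal is correct and follows essentially the same route as the paper's proof: both take $A'$ from Lemma~\ref{PFP lemma}, read off SPFP and finiteness from (c) and (d), establish $\rho_A\le\rho'_{A'}=\rho_{A'}$ from (a) and (b) via the event containments, and for the strict inequality exhibit the configuration in which exactly $E(P)$ survives for some $A'$-path $P$ that is not an $A$-path. The only difference is expository: the paper dispatches the key claim ``$E(P)$ contains no $A$-walk'' in one line (``otherwise either $P$ is an $A$-path or the walk contains an instruction $uvu$''), whereas you spell out the forced step-by-step traversal of $P$, and for the ``if'' direction the paper routes through $\rho'_A$ while you compare the two events directly; both are fine.
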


\begin{proof} For given $A$ let $A'$ be as in Lemma~\ref{PFP lemma}. By (c) $A'$ is an SPFP and by (d) $A'$ is finite when $A$ is finite. Now if we let $Y_i$ be the event that the edges of the $A'$-path $P_i$ survive, $X$ be the event that an $A$-walk from $s$ to $r$ survives, and $Z$ be the event that an $A'$-walk from $s$ to $r$ survives, then $X\subseteq Y_1\cup Y_2\cup\dots Y_k\subseteq Z$ by (a) and the fact that every $A'$-path is an $A'$-walk. Moreover the second containment is equality by (b). Thus $\rho_A(G,p)=\Prob(X)\le\Prob(Y_1\cup\dots\cup Y_k)=\rho_{A'}'(G,p)=\Prob(Z)=\rho_{A'}(G,p)$, as desired. 

If every $A'$-path is an $A$-path, then $\rho'_{A'}\le \rho'_A$. Since every $A$-path is an $A$-walk, it now follows that $\rho'_A\le\rho_A\le\rho'_{A'}\le\rho'_A$ and the desired equality holds. Now suppose that some $A'$-path $P_i$ is not an $A$-path. Let $Y$ be the event that only the edges in $P_i$ survive, but all other edges fail. Clearly $Y\subseteq Y_i$ and $\Prob(Y)>0$ since every edge $e$ has $0<p(e)<1$. Moreover, $E(P_i)$ does not contain an $A$-walk, since otherwise we would get the contradiction that either $P_i$ is an $A$-path, or that the $A$-walk contains an instruction of the form $uvu$. Thus $X\subseteq(Y_1\cup\dots\cup Y_k)-Y$, and we get that $\rho_A<\rho_{A'}'$.
\end{proof}

\begin{definition} 
We call a finite SPFP $A$ {\it optimal} for $(G,p)$ if for every finite protocol $A'$ we have $\rho_{A'}(G,p)\le \rho_A(G,p)$, and $\rho_A(G,p)$ is the probability that some $A$-path survives.
\end{definition}

Proposition~\ref{PFP} immediately implies that for every $(G,p)$ there is an optimal SPFP $A$ and 
$$\rh(G,p)=\max\{\rho'_A(G,p): A {\text{~is~a~finite~SPFP}}\}.$$ 

For an (optimal) SPFP $A$  we can compute $\rho_A(G,p)=\Prob(Y_1\cup\dots \cup Y_k)$ by Inclusion-Exclusion once we know all $A$-paths $P_i$. Thus $\rho_A(G,p)$ is a polynomial in $p$ if $p$ is constant. Moreover, since there are only finitely PFP's it follows that $\rh(G,p)$ is piecewise polynomial in $p$ if every edge has the same probability $p$.

\section{Series-parallel replacements}\label{sec:series}

In this section we present a method for building large graphs whose reliability can be computed easily.

Given graphs $G_1$, $G_2$ with senders $s_{1},s_2$ and receivers $r_{1},r_{2}$ respectively, we can obtain a graph $H$ with sender $s$ and receiver $r$ by \emph{series operation}, written $H = G_1 \circ G_2$,  by setting $s = s_{1}$, $ r = r_{2}$, and identifying $r_1, s_{2}$ with a new vertex $x$. We obtain $H$ by \emph{parallel operation}, written $G_1 || G_2$, by identifying  $s=s_{1}=s_{2}$ and $r= r_{1} = r_{2}$. Any graph that can be built from $K_2$ using only these operations is called {\em series-parallel}. 

\begin{proposition}\label{series circuit}
Let $H$ be a series-parallel graph, $A_{s,r}$ the CFP in $H$ and $A_{r,s}$ be the CFP in $H$ if we interchange $s$ and $r$.
If $A_H\subseteq A_{s,r}\cup A_{r,s}$, then every walk $W=v_0,v_1,\dots,v_k$ such that $v_{i-1}v_iv_{i+1}\in A_H$ for all $i$ with $1\le i\le k-1$ is a path.
Specifically if $W$ is nontrivial, then it is not closed.
\end{proposition}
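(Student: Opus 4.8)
The plan is to build a real-valued potential on the vertices that forces any admissible walk to be monotone, and hence a path. Concretely, I would first prove the following claim and then deduce the proposition from it: there is a function $\phi\colon V(H)\to\mathbb{R}$ with $\phi(s)=0$ and $\phi(r)=1$ that is \emph{strictly increasing along every $s,r$-path of $H$}. Granting this, the proposition is short, so the bulk of the work is the claim.

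I would prove the claim by induction on the series-parallel construction of $H$. In the base case $H=K_2$ the only $s,r$-path is the edge $sr$, and $\phi(s)=0<1=\phi(r)$ works. For a parallel composition $H=G_1\|G_2$ the parts share only $s$ and $r$, so by an edge-partition argument every $s,r$-path of $H$ lies entirely inside $G_1$ or entirely inside $G_2$; I can therefore glue the functions $\phi_1,\phi_2$ supplied by induction, which already agree on $\{s,r\}$. For a series composition $H=G_1\circ G_2$ with cut vertex $x=r_1=s_2$, every $s,r$-path passes through $x$ exactly once and splits into an $s_1,r_1$-path in $G_1$ followed by an $s_2,r_2$-path in $G_2$; I rescale the inductive functions so that $G_1$ maps into $[0,\tfrac12]$ and $G_2$ into $[\tfrac12,1]$, with $x$ receiving the value $\tfrac12$ consistently. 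In each case $\phi$ increases strictly along every $s,r$-path.

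Next I translate the claim into a statement about instructions. If $uvw\in A_{s,r}$ then $u,v,w$ occur consecutively on some $s,r$-path, so $\phi(u)<\phi(v)<\phi(w)$. Reversing an $r,s$-path yields an $s,r$-path, so $uvw\in A_{r,s}$ forces $\phi(u)>\phi(v)>\phi(w)$. Hence every instruction in $A_{s,r}\cup A_{r,s}$, and in particular every internal instruction of $W$, is strictly monotone in $\phi$.

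Finally I propagate monotonicity along $W=v_0,\dots,v_k$. The monotone direction cannot switch: once an instruction gives $\phi(v_i)<\phi(v_{i+1})$, the next instruction shares these two values and so must also be increasing, and symmetrically for the decreasing case. Thus $\phi$ is strictly monotone along all of $W$, its vertices have pairwise distinct $\phi$-values, and $W$ is a path; the short walks with $k\le 1$ are paths trivially. Strictness also gives $\phi(v_0)\ne\phi(v_k)$ whenever $k\ge 1$, so a nontrivial $W$ is not closed. I expect the only real obstacle to be the inductive construction of $\phi$, specifically verifying that $s,r$-paths remain inside a single part under parallel composition and cross the cut vertex exactly once under series composition, which is precisely what makes the rescaling consistent.
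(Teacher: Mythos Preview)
Your proposal is correct and follows essentially the same approach as the paper: both build a potential function on $V(H)$ that is strictly increasing along every $s,r$-path, deduce that instructions in $A_{s,r}$ (respectively $A_{r,s}$) are strictly increasing (respectively decreasing) in this potential, and then propagate the monotone direction along $W$. The only difference is that the paper asserts the existence of such a function (in fact an injection into $[0,1]$) in one sentence as ``easy to show'' from the recursive definition, whereas you spell out the parallel and series induction steps explicitly; your version does not need injectivity, and the monotonicity along instructions suffices.
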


\begin{proof} The first statement immediately implies the second, and so it suffices to prove the former.

Using the recursive definition of $H$ it is easy to show that there is an injection $f:V(H)\to[0,1]$ such that $f$ is increasing along every \srp. 
Thus $f$ is strictly increasing along every instruction in $A_{s,r}$, and strictly decreasing along every instruction in $A_{r,s}$. Suppose $W$ is such a walk. Since $H$ has no loops we may assume that $k\ge 2$ and $f(v_0)\neq f(v_1)$. If $f(v_0)<f(v_1)$, then it follows that $v_0v_1v_2\in A_{s,r}$ and thus $f(v_1)<f(v_2)$. Continuing along it follows that $f$ is strictly increasing along $W$ and thus all $v_i$ must be distinct. If $f(v_0)>f(v_1)$, then it follows similarly that $f$ is strictly decreasing along $W$.
\end{proof}

Proposition~\ref{series circuit} implies that the CFP $A^*$ for a series-parallel graph $H$ is finite (as it has no essential circuit), and thus  $\rh(H)=\rho(H)$.
Moreover, it is easy to compute $\rho(H)$ when $H$ is series parallel, since in general $\rho(G_1\circ G_2)=\rho(G_1)\rho(G_2)$ and $\rho(G_1||G_2)=\rho(G_1)+\rho(G_2)-\rho(G_1)\rho(G_2)$. Our next result generalizes these equations and exploits the fact that $p:E(G)\to(0,1)$ need not be constant.

\begin{definition}
Let $G_1$, $H$ be graphs with specified vertices $s,r$. If $e_1=xy$ is an edge in $G_1$, then let the $G$ be the graph obtained from $G_1-e_1$ by identifying $s,r$ in $H$ with $x,y$ in $G_1$. (See Figure~\ref{fig:B0 replace s1,s2} for an example.) We call $G$ the {\em expansion} of $G_1$ at $e_1$ by $H$. If $p$ is a probability distribution on $E(G)$, then the {\em implied distribution} $p_1$ on $G_1$ is given by $p_1(e)=p(e)$ for $e\neq e_1$, and $p_1(e_1)=\rho(H,p|_H)$, where $p|_H$ is the restriction of $p$ to $E(H)$. 

If $uvw$ is an instruction in $G_1$, then the {\em corresponding} set of instructions $I(uvw)$ in $G$ is given by $I(uvw)=\{uvw\}$ (when $e_1\neq uv,vw$), $I(uvw)=\{u'vw: u'\in N_H(v)\}$ (when $e_1=uv$) and  $I(uvw)=\{uvw': w'\in N_H(v)\}$ (when $e_1=vw$,) where $N_H(v)$ denotes the set of neighbors of $v$ in $H$. If $A$ is a set of instructions in $G_1$, then we let $I(A)=\bigcup_{uvw\in A}I(uvw)$.     

Let $A_{u,v}$ denote the CFP on $H$ with $s=u,r=v$ and let $A$ be any protocol for $G_1$. If there are some $A$-paths in $G_1$ containing $xy$ in which $x$ directly precedes $y$ and some in which $y$ directly precedes $x$, then we define $A_H=A_{x,y}\cup A_{y,x}$. If no $A$-path in $G_1$ contains $xy$ we let $A_H=\emptyset$. If in every $A$-path in $G_1$ that contains $xy$ we have that $x$ immediately precedes $y$ ($y$ immediately precedes $x$), then we let $A_H=A_{x,y}$ and $A_H=A_{y,x}$ respectively. In any case $A_H$ satisfies the hypothesis of Proposition~\ref{series circuit} when $H$ is series-parallel. With this notation we will define the {\em extension} of $A$ to $G$ by $A^+=I(A)\cup A_H$.
\end{definition}

\bigskip
\begin{figure}[ht]
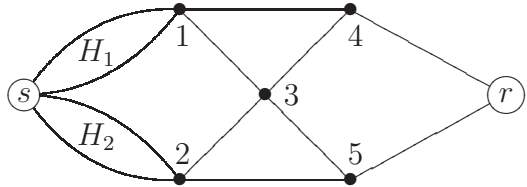

\centerline{
\hbox{\xy /r3.8pc/:,
(0,0)*+{s}*\cir{}="B0",
(2,0),
{\xypolygon4"A"{~>{}}},
(4,0)*+{r}*\cir{}="B1",
"A0"*{\bullet}, "A1"*{\bullet},
"A2"*{\bullet}, "A3"*{\bullet},"A4"*{\bullet},
"A0"+<10pt,0pt> *{3},
"A1"+<2pt,-10pt> *{4},
"A2"+<1pt,-10pt> *{1},
"A3"+<1pt, 10pt> *{2},
"A4"+<2pt,10pt> *{5},
"A0";"A1"**@{-},
"A0";"A2"**@{-},
"A0";"A3"**@{-},
"A0";"A4"**@{-},
"A1";"A2"**@{-},
"A3";"A4"**@{-},
"B0";"A2"**\crv{(0.5,0.75)},
"B0";"A2"**\crv{(0.75,0)},
"B0";"A3"**\crv{(0.5,-0.75)},
"B0";"A3"**\crv{(0.75,0)},
"B1";"A1"**@{-},
"B1";"A4"**@{-},
(0.6,0.35)*+{H_1},
(0.6,-0.35)*+{H_2},
\endxy}}
\caption{$G$ obtained from $B_0$ by replacing $s1$, $s2$ by $H_1, H_2$}\label{fig:B0 replace s1,s2}
\end{figure}

\begin{proposition}\label{prop:replace}
Let $H$ be a series-parallel graph, and $G$ be the expansion of some graph $G_1$ at some edge $e_1=xy$ by $H$.
If $(G,p)$ is a given probability distribution and $p_1$ is its implied distribution on $G_1$, then the following hold:
\begin{enumerate}
\item For every protocol $A$ for $G_1$: $\rho'_A(G_1,p_1)=\rho'_{A^+}(G,p)$.
\item $\rho(G,p)=\rho(G_1,p_1)$.
\item If $A, A'$ are protocols for $G_1$, then $A^+\subseteq (A')^+$ iff $A\subseteq A'$.
\item For every protocol $A$ for $G_1$: $A$ is a finite SPFP for $G_1$ if and only if $A^+$ is a finite SPFP for $G$.
\item $\rh(G,p)\ge \rh(G_1,p_1)$ with equality if $e_1$ is in no essential circuit for the CFP of $G_1$. 
\end{enumerate}
\end{proposition}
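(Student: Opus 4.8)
The five parts are interdependent, so the plan is to prove (a) first and bootstrap the rest from it. The key structural fact for (a) is that $E(G)$ splits as the disjoint union of $E(G_1)\setminus\{e_1\}$ (the ``exterior'' edges, shared with $G_1$, on which $p=p_1$) and $E(H)$, with $H$ meeting the rest of $G$ only at the boundary vertices $x,y$. First I would couple the two spaces using the same randomness on the exterior edges and compare the block $\{e_1\}$ in $(G_1,p_1)$ with the block $E(H)$ in $(G,p)$: the event ``$e_1$ survives'' has probability $p_1(e_1)=\rho(H,p|_H)$, which is exactly the probability that $x,y$ lie in one component of $(H,p|_H)$. Conditioning on the exterior state $\omega$, I would check that ``some $A^+$-path survives in $G$'' and ``some $A$-path survives in $G_1$'' have equal conditional probability: if some exterior $A$-path is alive in $\omega$ both equal $1$; otherwise, writing $D(\omega)$ for the $A$-paths through $e_1$ whose exterior edges are alive, both equal $p_1(e_1)$ when $D(\omega)\neq\emptyset$ and $0$ otherwise. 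The point to verify is that, over a fixed $P\in D(\omega)$, the $A^+$-paths lying above $P$ realize \emph{every} $s,r$- (or $r,s$-) path of $H$, which holds because the construction places the full CFP $A_{x,y}$ (or $A_{y,x}$) into $A_H$ precisely in the direction(s) $A$-paths use $e_1$; hence ``some $H$-part is alive'' is exactly ``$x,y$ connected in $H$.'' Integrating over $\omega$ gives (a).

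Part (b) I would derive from (a) with $A$ the CFP $A^*_{G_1}$: then $\rho'_A(G_1,p_1)=\rho(G_1,p_1)$, and a short path-surgery argument shows $(A^*_{G_1})^+=A^*_G$ (any $s,r$-path of $G$ meets $H$ in a single $x,y$-subpath by the boundary structure, and contracting it to $e_1$ yields an $s,r$-path of $G_1$), so $\rho'_{(A^*_{G_1})^+}(G,p)=\rho(G,p)$. For (c) the main idea is to classify each instruction of $G$ by how many of its two edges lie in $E(H)$: exterior (zero), crossing (one), interior (two). These classes are disjoint, the interior instructions are exactly the $A_H$-candidates, and every instruction of $I(A)$ is exterior or crossing. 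Each crossing or exterior instruction projects to a unique instruction of $G_1$, and $I$ is a section of this projection with $I(uvw)\cap I(u'v'w')=\emptyset$ for distinct sources. Thus $A^+\subseteq(A')^+$ forces $I(A)\subseteq I(A')$ (the exterior/crossing part cannot be absorbed by the interior set $A'_H$) and, after projecting, $A\subseteq A'$; conversely $A\subseteq A'$ gives $I(A)\subseteq I(A')$, and since every $A$-path is an $A'$-path the used directions only grow, so $A_H\subseteq A'_H$.

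For (d) there are two matters: the SPFP property and finiteness. The SPFP equivalence follows from the path correspondence implicit in (a)---$A^+$-paths project onto $A$-paths and each $A$-path lifts by expanding $e_1$ into some $H$-path---so ``every instruction lies on a path'' transfers both ways; that $A^+$ is a PFP follows from (c) and (b), since $A\subseteq A^*_{G_1}$ gives $A^+\subseteq(A^*_{G_1})^+=A^*_G$. For finiteness I would avoid essential circuits and use the count of walks: define $\pi$ sending an $A^+$-walk to the $A$-walk obtained by contracting each maximal $H$-portion to $e_1$. Here Proposition~\ref{series circuit} is essential, since it guarantees each maximal $H$-portion is a genuine $x,y$- or $y,x$-path, so $\pi$ is well defined and lands in $A$-walks. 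Then $\pi$ is onto and has finite fibers (each $A$-walk uses $e_1$ finitely often and $H$ has finitely many paths), giving ``$A$ has finitely many walks $\iff A^+$ has finitely many walks,'' i.e. (d).

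Finally (e). The inequality $\rh(G,p)\ge\rh(G_1,p_1)$ is immediate: take a finite SPFP $A$ of $G_1$ attaining $\rh(G_1,p_1)$; by (d) $A^+$ is a finite SPFP of $G$ and by (a) $\rho'_{A^+}(G,p)=\rho'_A(G_1,p_1)$. For equality I would show every finite SPFP $B$ of $G$ has $\rho'_B(G,p)\le\rh(G_1,p_1)$. Put $A=\pi(B)$; then $B\subseteq A^+$ (the interior instructions of $B$ lie in $A_H$ because $B\subseteq A^*_G$ and $B$ is strongly essential), so $\rho'_B\le\rho'_{A^+}=\rho'_A(G_1,p_1)$ by (a), and it suffices that $A$ be a finite SPFP of $G_1$. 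This is exactly where the hypothesis enters, and it is the main obstacle: any essential circuit of $A$ is an essential circuit of the CFP $A^*_{G_1}$ (as $A\subseteq A^*_{G_1}$ and CFP instructions are essential), and it either traverses $e_1$---forbidden by the assumption---or avoids $e_1$, in which case it is a closed $B$-trail exhibiting an essential circuit of $B$, contradicting finiteness of $B$. Hence $A$ is a finite SPFP and $\rho'_B\le\rh(G_1,p_1)$. The delicate point throughout, and the reason the hypothesis cannot be dropped, is that contracting $H$ can turn a finite protocol on $G$ into one on $G_1$ that uses $e_1$ twice in the same direction, creating an essential circuit with no counterpart in $G$; the no-essential-circuit condition on $e_1$ is precisely what forbids this.
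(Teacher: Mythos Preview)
Your proposal is correct and covers all five parts, but it diverges from the paper's proof in two substantive places.

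For (a), the paper sets up a bijection between $A$-paths $P$ in $G_1$ and families $P^+$ of $A^+$-paths in $G$ (replace $e_1$ by an $x,y$- or $y,x$-path in $H$), lets $X_i$ be the event that $P_i$ survives in $(G_1,p_1)$ and $Y_i$ the event that some member of $P_i^+$ survives in $(G,p)$, checks $\Prob(\bigcap_J X_i)=\Prob(\bigcap_J Y_i)$ for every index set $J$, and concludes by inclusion--exclusion. Your coupling/conditioning argument is a cleaner probabilistic shortcut: you condition on the common exterior state and observe that the $e_1$-block and the $H$-block contribute the same conditional probability $p_1(e_1)=\rho(H,p|_H)$. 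Both arguments ultimately rest on the same fact (the lifts of an $A$-path using $e_1$ range over \emph{all} $x,y$-paths of $H$ because the full CFP $A_{x,y}$ is folded into $A_H$), but yours avoids the combinatorics of inclusion--exclusion entirely.

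For (d), the paper argues finiteness via essential circuits: an essential circuit for $A$ lifts to one for $A^+$ by replacing each pass through $e_1$ with an $H$-path, and conversely an essential circuit for $A^+$ cannot live entirely in $H$ (Proposition~\ref{series circuit}) and projects to one for $A$ after deleting the interior $H$-vertices. You instead count walks directly: the projection $\pi$ from $A^+$-walks to $A$-walks is well-defined (Proposition~\ref{series circuit} forces each maximal $H$-segment to be an $x,y$- or $y,x$-path), surjective, and has finite fibers. This is a valid alternative; note only that surjectivity of $\pi$ uses that $A$ is an SPFP (so that any direction in which $e_1$ is traversed by an $A$-walk is also traversed by an $A$-\emph{path}, hence $A_H$ contains the needed CFP), which is harmless since the SPFP equivalence is established first.

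Parts (b), (c), and (e) follow the paper's line closely; in (e) your projection $A=\pi(B)$ coincides with the paper's $A=\{uvw\in A_1^*: I(uvw)\cap B\neq\emptyset\}$, and your finiteness argument for $A$ is the same as the paper's. One small remark: for the bound $\rho'_B\le\rho'_{A^+}=\rho'_A(G_1,p_1)\le\rh(G_1,p_1)$ you only need $A$ to be finite (since $\rho'_A\le\rho_A\le\rh$), so the SPFP verification for $A$, while true, is not strictly required.
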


\begin{proof}
Let $T_{u,v}$ be the set of $u,v$-paths in $H$. For every path $P=v_0,\dots,v_k$ in $G_1$ let $P^+$ be the following family of paths in $G$: If $e_1=xy$ is not on $P$, then $P^+=\{P\}$. If $v_i=x, v_{i+1}=y$, then $P^+=\{v_0,v_1,\dots,v_{i-1}, T,v_{i+2},\dots,v_k: T\in T_{x,y}\}$ and if $v_i=y, v_{i+1}=x$, then $P^+=\{v_0,v_1,\dots,v_{i-1}, T,v_{i+2},\dots,v_k: T\in T_{y,x}\}$. 

Let $A$ be any protocol for $G_1$. If $P$ is an $A$-path, then it follows that $P^+$ is a family of $A^+$-paths. Similarly if some member of $P^+$ is an $A^+$-path, then $P$ is an $A$-path. So the sets $P^+$ form a partition of the family of $A^+$-paths. If $X_i$ is the event that all edges of the $A$-path $P_i$ survive in $(G_1,p_1)$ and $Y_i$ is the event that the edges of some path in $P_i^+$ survive in $(G,p)$, then it is not hard to see that for every collection of indices $J$, $\Prob_{(G_1,p_1)}(\bigcap_{i\in J}X_i)=\Prob_{(G,p)}(\bigcap_{i\in J}Y_i)$. Thus by inclusion-exclusion it follows that (a) holds:
$$\rho'_A(G_1,p_1)=\Prob_{(G_1,p_1)}(X_1\cup X_2\cup\dots\cup X_k)=\Prob_{(G,p)}(Y_1\cup Y_2\cup\dots\cup Y_k)=\rho'_{A^+}(G,p).$$

Let $A^*_1$ be the CFP on $G_1$ and $A^*$ be the CFP on $G$. Since every \srp~in $G$ is in $P^+$ for some \srp~$P$ in $G_1$, it follows that $A^*=(A^*_1)^+$.
Thus $\rho(G_1,p_1)=\rho'_{A^*_1}(G_1,p_1)=\rho'_{(A^*_1)^+}(G,p)=\rho(G_1,p_1)$, and (b) holds.

If $A\subseteq A'$, then $I(A)\subseteq I(A')$ and $A_H\subseteq A'_H$, so that $A^+\subseteq (A')^+$.
If $A^+\subseteq (A')^+$, then  $I(A)\subseteq I(A')$ and thus $A\subseteq A'$, so that (c) follows. 

$A$ is a PFP for $G_1$ iff $A^+$ is a PFP for $G$ follows by combining (c) and $A^*=(A^*_1)^+$. Let $uvw\in A$. Then $uvw$ is strongly essential for $A$ iff $uvw$ is contained in some $A$-path $P$ iff every $xyz\in I(uvw)$ is contained in some member of $P^+$ for some $A$-path $P$ iff every $xyz\in I(uvw)$ is in an $A^+$-path iff every $xyz\in I(uvw)$ is strongly essential for $A^+$. Furthermore every element of $A_H$ is trivially strongly essential for $A^+$ by definition. Thus $A$ is an SPFP iff $A^+$ is an SPFP. 

It remains to consider finiteness, where we may now assume that every instruction in $A$ and $A^+$ is essential.
Replacing every occurence of the edge $e_1$ in an essential circuit for $A$ in $G_1$ with a path from $T_{x,y}$ or $T_{y,x}$ as appropriate it is easy to see that we obtain an essentail circuit for $A^+$ in $G$. So suppose $G$ contains an essential circuit $C$ for $A^+$. Then Proposition~\ref{series circuit} implies that $C$ cannot be entirely contained in $H$ (and thus only contain instructions from $A_H$), and that if $C$ enters $H$ at one of $x,y$, then it must leave it at the other.  Thus if we remove all vertices in $V(H)-\{x,y\}$ from the sequence $C$, then we get a closed walk $C'$ in $G_1$. Moreover, by construction of $A^+$ it follows that every instruction $uvw$ contained in $C'$ is in $A$, so that $C'$ must be an essential circuit in $G_1$ and (d) is proven.

For (e) it follows so far that
\begin{eqnarray*}
\rh(G_1,p_1)&=&\max\{\rho'_A(G_1,p_1): A {\text{~is~a~finite~SPFP~for~}}G_1\} \\
 &=&\max\{\rho'_{A^+}(G,p): A {\text{~is~a~finite~SPFP~for~}}G_1\} \\
&=&\max\{\rho'_{A^+}(G,p): A^+ {\text{~is~a~finite~SPFP~for~}}G\} \\
&\leq&\max\{\rho'_B(G,p): B {\text{~is~a~finite~SPFP~for~}}G\} =\rh(G,p).
\end{eqnarray*}

For equality it remains to show that for every finite SPFP $B$ for $(G,p)$ there is a finite SPFP $A$ for $(G_1,p_1)$ with $\rho'_B(G,p)\leq \rho'_{A^+}(G,p)$. 
For a given $B$, consider $A=\{uvw\in A_1^*: I(uvw)\cap B\neq\emptyset\}$. We first show that $B\subseteq A^+$, since then $\rho'_B(G,p)\leq \rho'_{A^+}(G,p)$ is trivial.
So let $abc\in B$ be given. Since $B$ is an SPFP there is a $B$-path $Q$ containing $abc$. For this path $Q$ in $G$ there must be a path $P$ in $G_1$ with $Q\in P^+$.
Now for every instruction $uvw$ contained in $P$ we have that $I(uvw)$ has an instruction in $Q$ and thus $B$. Hence $P$ is an $A$-path, and thus $Q\in P^+$ is an $A^+$-path. Specifically $abc\in A^+$, as desired.

It remains to show that $A$ is a finite SPFP. $A$ is a PFP since $A\subseteq A_1^*$. If $uvw\in A$, then there is $abc\in B$ with $abc\in I(uvw)$. 
As in the previous argument, $abc$ is in some $B$-path $Q$, and there is an $A$-path $P$ with $Q\in P^+$. Since $abc$ is in $Q$ and $I(uvw)$ we have that $uvw$ is an instruction in (the $A$-path) $P$. Thus $A$ is an SPFP.
Finally, suppose that $A$ is not finite, that is it contains an essential circuit $C=v_0,\dots,v_k$ in $G_1$. Since by assumption $C$ does not use $e_1$ then every instruction in $A$ is also in $B$, so that $C$ is an essential circuit in $B$, a contradiction. 
\end{proof}

\section{Discrepancies}\label{sec:disc}

In general we are more interested in finding an optimal protocol for $(G,p)$, than the actual value of $\rh(G,p)$. Since $\rh$ can be very close to $\rho$ it makes sense to study the difference between these parameters.

\begin{definition}
Let $A^*$ be the CFP on $G$. For a set of instructions $I\subseteq A^*$, we define its {\em discrepancy} $d_I$ as $d_I=d_I(G,p)=\rho(G,p)-\rho_{A^*-I}(G,p)$.
The {\em minimum discrepancy} $\dhat$ of $G$ is defined as $\dhat=\dhat(G,p)=\rho(G,p)-\rh(G,p)=\min\{d_I(G,p): A^*-I$ is finite$\}$.
\end{definition}

Thus $d_I$ is the probability that some $s,r$-walk remains, but that every such $s,r$-path contains an instruction in $I$. 
Observe that if $I_1\subset I_2$, then it follows directly that $d_{I_1}\le d_{I_2}$, that is $d_I$ is monotone in $I$.
The following result is easy to see, but a proof is Lemma 3.3.5 of~\cite{Pat} with the notation for $d_I$ being $d_{A^*-I}$.

\begin{lemma}\label{def lemma}
Let $(G, p)$ and $I\subseteq A^*$. If $P_1,\dots, P_k$ are the \srp s that use at least one instruction from $I$, and $Z_i$ is the event that
the edges in $P_i$ all survive, but every \srt~not using an instruction from $I$ has a failed edge, then $d_{I} = \Prob(Z_1\cup\dots\cup Z_k)$.
\end{lemma}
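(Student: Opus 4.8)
The plan is to prove the pointwise statement that the two events $Z_1\cup\dots\cup Z_k$ and "$s,r$ connected but no good trail survives" coincide, so that taking probabilities yields the identity with no inclusion--exclusion needed. Write $C$ for the event that some \srp~survives (equivalently that $s,r$ lie in one component, since $\rho=\rho_{A^*}$), and $U$ for the event that some $(A^*-I)$-walk survives, so that $\rho_{A^*-I}(G,p)=\Prob(U)$ by definition. Every $(A^*-I)$-walk is an \srw, so $U\subseteq C$, and hence
$$d_I=\rho-\rho_{A^*-I}=\Prob(C)-\Prob(U)=\Prob(C\cap U^c).$$
The task then reduces to showing $Z_1\cup\dots\cup Z_k=C\cap U^c$.

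First I would reduce walks to trails. If a surviving $(A^*-I)$-walk repeats a directed edge, say $v_i=v_j$ and $v_{i+1}=v_{j+1}$ with $i<j$, then deleting the segment $v_{i+1},\dots,v_j$ gives a shorter \srw~on a subset of the surviving edges, and every instruction of the shortened walk already occurs in the original, hence still lies in $A^*-I$; iterating produces a surviving $(A^*-I)$-\srt. Thus $U$ is equally the event that some $(A^*-I)$-trail survives, and $U^c$ is precisely the event figuring in each $Z_i$: every \srt~whose instructions all lie in $A^*-I$ (this is the intended reading of ``not using an instruction from $I$'' inside the CFP) has a failed edge. Consequently $Z_i=\{P_i\text{ survives}\}\cap U^c$, and $Z_1\cup\dots\cup Z_k=E\cap U^c$, where $E=\bigcup_i\{P_i\text{ survives}\}$ is the event that some \srp~using at least one instruction of $I$ survives.

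It remains to verify $E\cap U^c=C\cap U^c$. The inclusion $E\cap U^c\subseteq C\cap U^c$ is immediate, since each $P_i$ is an \srp~and its survival forces $C$. For the converse, suppose $C\cap U^c$ holds, and let $P$ be a surviving \srp; by the definition of the CFP all instructions of $P$ lie in $A^*$. Were $P$ to avoid $I$, its instructions would lie in $A^*-I$, making $P$ a surviving $(A^*-I)$-trail and contradicting $U^c$; hence $P$ uses an instruction of $I$, so $P$ equals some $P_i$ and $E$ holds. This gives $C\cap U^c\subseteq E\cap U^c$, establishing the event identity, and passing to probabilities yields $d_I=\Prob(Z_1\cup\dots\cup Z_k)$.

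The step I expect to be the main obstacle is the bookkeeping of instruction sets in the walk-to-trail reduction, together with fixing the right notion of ``trail not using $I$.'' One must check that contracting a repeated directed edge never introduces an instruction outside $A^*-I$ (it cannot, as noted), and one must restrict attention to \srt s whose instructions lie in $A^*-I$ rather than to arbitrary \srt s avoiding $I$ that might traverse instructions outside $A^*$ altogether; it is the former class that matches $U^c=(\rho_{A^*-I})^c$ and hence the defining formula $d_I=\rho-\rho_{A^*-I}$. Once this reading is pinned down, the remaining work is only the two short event-containments above.
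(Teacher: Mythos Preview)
Your argument is correct and is exactly the kind of routine verification the paper has in mind: the paper does not actually prove this lemma in the text, calling it ``easy to see'' and deferring to Lemma~3.3.5 of~\cite{Pat}. Your decomposition $d_I=\Prob(C)-\Prob(U)=\Prob(C\cap U^c)$ together with the event identity $\bigcup_i Z_i=C\cap U^c$ is the natural proof, and each step (the walk-to-trail shortening preserving the instruction set, and the two containments $E\cap U^c\subseteq C\cap U^c$ and $C\cap U^c\subseteq E\cap U^c$) is handled cleanly.

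Your caveat about the reading of ``\srt\ not using an instruction from $I$'' is well taken: for the identity $d_I=\rho-\rho_{A^*-I}$ to match, one must interpret this as an $(A^*{-}I)$-trail, since $\rho_{A^*-I}$ is by definition the survival probability of an $(A^*{-}I)$-walk. The paper's own usage in Example~\ref{B0,p} (checking whether $P+e$ contains an \srp\ not containing $432$) is consistent with your reading, because for paths the two notions coincide. So there is no gap here, only a point of phrasing that you have correctly resolved.
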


The following example will give an indication on how the minimum discrepancy of a graph can be determined, and we will use this result in the proof of Theorem~\ref{G crossing}. The discrepancies of all 10 forbidden minors for $A^*$ to be finite are computed in Chapter 4 of~\cite{Pat} for the case when $p$ is constant on $E(G)$.

\begin{example}\label{B0,p}
Let $B_0$ be the graph from Example~\ref{B0} and let $p,p_1,p_2\in(0,1)$. Let $p'$ be the probability distribution on $E(B_0)$ given by assigning probability $p$ to every edge, except that $s1$ receives probability $p_1$ and $s2$ receives probability $p_2$. Let $P=s,1,4,3,2,5,r$ and $P'=s,2,5,3,1,4,r$. Let $X$ be the event that the 6 edges in $P$ survive and all others fail. Define $X'$ for $P'$ similarly.

To find the minimum discrepancy $\dhat$ of $(B_0,p')$, consider the instruction 432. Observe that $A^*-432$ is finite, since this instruction is used in the only essential circuit $C$. The only \srp~using 432 is $P_1=P$. With the notation from Lemma~\ref{def lemma} we get that $Z_1=X$ since every $P+e$ contains an \srp~not containing 432. Thus by Lemma~\ref{def lemma}, $d_{432}=\Prob(X)=p^5p_1(1-p)^3(1-p_2)$. A similar argument with $P'$ shows that $d_{531}=\Prob(X')=p^5p_2(1-p)^3(1-p_1)$. 

Thus $\dhat\le \min\{d_{432},d_{531}\}=p^5(1-p)^3(\min\{p_1,p_2\}-p_1p_2)=d'$. To see that equality holds, consider $d_I$ for other sets $I$ such that $A^*-I$ is finite. Observe first that if $I$ contains no instruction contained in $P$ or $P'$, then every instruction contained in $C$ is in $A^*-I$ and is essential for $A^*-I$, so that $A^*-I$ is not finite.  Moreover, if $I$ contains one of $432$ or $531$, then it follows by monotonicity that $d_I\ge d'$ as desired.

Suppose now that $I$ contains an instruction $uvw$ contained in $P$ other than 432. We again have $P_1=P$, but in every case there is also a different \srp~$P_2$ containing $uvw$. Thus $d_I\ge\Prob(Z_1\cap Z_2)>\Prob(X)=d_{432}\ge\dhat$. A similar argument shows that if $I$ has an instruction contained in $P'$ other than 531, then $d_I>\Prob(X')\ge \dhat$.

It follows that $\dhat=d'=p^5(1-p)^3(\min\{p_1,p_2\}-p_1p_2)$.  Specifically, if $p=p_1=p_2$, then $\dhat=p^6(1-p)^4$.   
\end{example}

Combining this example with Proposition~\ref{prop:replace} and the observation that neither of $s1,s2$ is in an essential circuit we obtain the following proposition which we will use in Section~\ref{sec:piecewise}.

\begin{proposition}\label{B0+}
Let $H_1,H_2$ be series-parallel graphs and $G$ be the graph obtained from $B_0-\{s1,s2\}$ by identifying $s$ in $H_1,H_2$ with $s$, and identifying $r$ in $H_1,H_2$ with 1 and 2 respectively as shown in Figure~\ref{fig:B0 replace s1,s2}. If $p\in(0,1)$ is fixed and $p_i=\rho(H_i,p)$, then $\dhat(G,p)=p^5(1-p)^3(\min\{p_1,p_2\}-p_1p_2)$.
\end{proposition}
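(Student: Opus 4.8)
The plan is to derive Proposition~\ref{B0+} by combining the discrepancy computation of Example~\ref{B0,p} with the expansion machinery of Proposition~\ref{prop:replace}. The key observation is that the graph $G$ described here is exactly the expansion of $B_0$ performed successively at the two edges $s1$ and $s2$, with the series-parallel graphs $H_1$ and $H_2$ replacing them. Thus my first step is to set up this identification carefully: view $G$ as obtained from $B_0$ by first expanding at $e_1=s1$ by $H_1$, and then expanding the resulting graph at $e_2=s2$ by $H_2$. Since $H_1$ and $H_2$ share only the vertex $s$ with the rest of the graph and attach at disjoint edges, these two expansions are independent and can be applied in either order.

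Next I would track what the implied distribution $p_1$ on $B_0$ becomes under these expansions. By the definition of implied distribution, expanding at $s1$ by $H_1$ replaces the probability of the edge $s1$ with $\rho(H_1, p|_{H_1})$, which under a uniform probability $p$ is exactly $\rho(H_1,p)$; the problem statement calls this quantity $p_1$. Similarly the edge $s2$ receives probability $p_2=\rho(H_2,p)$, while every other edge retains probability $p$. This is precisely the distribution $p'$ of Example~\ref{B0,p}. So after carrying out both expansions, the relevant graph is $B_0$ equipped with the distribution $p'$ from that example, and Example~\ref{B0,p} already gives $\dhat(B_0,p')=p^5(1-p)^3(\min\{p_1,p_2\}-p_1p_2)$.

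The remaining ingredient is part (e) of Proposition~\ref{prop:replace}, which states that $\rh(G,p)=\rh(G_1,p_1)$ with equality precisely when $e_1$ is in no essential circuit for the CFP of $G_1$. I would invoke the computation in Example~\ref{B0}, where the unique essential circuit of $A^*$ is $C=1,4,3,2,5,3,1$; since neither $s1$ nor $s2$ appears in $C$, neither expansion edge lies in an essential circuit, so equality holds at each expansion. Combined with part (b), $\rho(G,p)=\rho(B_0,p')$, it follows that $\dhat(G,p)=\rho(G,p)-\rh(G,p)=\rho(B_0,p')-\rh(B_0,p')=\dhat(B_0,p')$, which is the desired expression. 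The only real care needed is in the composition of two expansions: I must check that applying the ``no essential circuit'' condition twice is legitimate, i.e.\ that after expanding at $s1$ the edge $s2$ still lies in no essential circuit of the intermediate graph. The main obstacle, then, is verifying that the essential-circuit structure is preserved under the first expansion so that part (e) applies cleanly to the second; this should follow from the finiteness and essential-circuit correspondence established in part (d) of Proposition~\ref{prop:replace}, but it is the step that requires genuine attention rather than bookkeeping.
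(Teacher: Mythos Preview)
Your proposal is correct and follows essentially the same approach as the paper: combine Example~\ref{B0,p} with Proposition~\ref{prop:replace}(b,e), using that neither $s1$ nor $s2$ lies in the unique essential circuit of $B_0$. The paper's proof is a one-line chain of equalities $\dhat(G,p)=\rho(G,p)-\rh(G,p)=\rho(B_0,p')-\rh(B_0,p')=\dhat(B_0,p')$ that glosses over the two-step expansion issue you flag; your observation that the essential-circuit correspondence in the proof of part~(d) handles the intermediate graph is exactly the right justification for that step.
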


\begin{proof}
$\dhat(G,p)=\rho(G,p)-\rh(G,p)=\rho(B_0,p')-\rh(B_0,p')=\dhat(B_0,p')=p^5(1-p)^3(\min\{p_1,p_2\}-p_1p_2)$.
\end{proof}

\section{Crossings of protocol reliability functions}\label{sec:crossing}

It is natural ask if for given graphs $G$, $H$ it must be the case that $\rh(G, p)< \rh (H,p)$ for all $p\in (0,1)$ or $\rh(H,p)< \rh(G,p)$ for all $p\in(0,1)$. We will adapt an idea of Kelmans~\cite{Kel} to show that this need not be the case in a very strong sense. Following his approach we let 
$\drho(G,H)=\rho(G,p)-\rho(H,p)$, ${\drh}(G,H)=\rh(G,p)-\rh(H,p)$ and we say that the {\em profile} of a function is $(m_1,m_2,\dots,m_k)$ if it has exactly $t$ zeroes $x_1,\dots,x_t$ in $(0,1)$ with $0<x_1<x_2<\dots<x_t<1$ and $x_i$ has multiplicity $m_i$. In this language our original question is whether the profile of $\drh(G,H)$ must be empty. The following lemma now gives a simple negative answer for our question. 

\begin{lemma}\label{path-cycle} 
If $G=P_{k}$ and $H=P_{k+1}||P_{k+1}$ for $k\ge 2$, then $\drh(G,H)$ has profile (1) with zero $x_1=\gamma_k$ the unique root of $f_k(p)=p^k+p^{k-1}+\dots+p^2+p-1$ in (0,1).
\end{lemma}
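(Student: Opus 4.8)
The plan is to reduce everything to ordinary two-terminal reliability and then carry out a one-variable analysis. Since both $P_k$ and $P_{k+1}||P_{k+1}$ are series-parallel, the remark following Proposition~\ref{series circuit} gives that their complete forwarding protocols are finite, so $\rh=\rho$ for each, and therefore $\drh(G,H)=\rho(P_k,p)-\rho(P_{k+1}||P_{k+1},p)$. A path on $k$ vertices has $k-1$ edges, so $\rho(P_k,p)=p^{k-1}$ (its single \srp~survives iff all its edges do) and $\rho(P_{k+1},p)=p^k$. Applying the parallel rule $\rho(G_1||G_2)=\rho(G_1)+\rho(G_2)-\rho(G_1)\rho(G_2)$ quoted before Proposition~\ref{prop:replace} (the two paths are edge-disjoint, so the survival events are independent) yields $\rho(P_{k+1}||P_{k+1},p)=2p^k-p^{2k}$. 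Hence $\drh(G,H)=p^{k-1}-2p^k+p^{2k}=p^{k-1}(1-2p+p^{k+1})$.

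Next I would factor the bracketed polynomial. A direct computation (telescoping $\sum_{i=1}^k p^i$) shows $(p-1)f_k(p)=p^{k+1}-2p+1=1-2p+p^{k+1}$, so $\drh(G,H)=p^{k-1}(p-1)f_k(p)$. On $(0,1)$ the factors $p^{k-1}$ and $p-1$ never vanish, so the zeroes of $\drh(G,H)$ in $(0,1)$ are exactly the roots of $f_k$ in $(0,1)$.

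Finally I would show $f_k$ has a single simple root there. Since $f_k'(p)=kp^{k-1}+(k-1)p^{k-2}+\dots+1>0$ for $p>0$, the polynomial $f_k$ is strictly increasing on $(0,\infty)$; combined with $f_k(0)=-1<0$ and $f_k(1)=k-1>0$ (using $k\ge 2$), the Intermediate Value Theorem gives a unique root $\gamma_k\in(0,1)$, and strict monotonicity forces $f_k'(\gamma_k)>0$. Because $\tfrac{d}{dp}\drh(G,H)\big|_{\gamma_k}=\gamma_k^{k-1}(\gamma_k-1)f_k'(\gamma_k)\ne 0$ while the other factors are nonzero at $\gamma_k$, the zero $\gamma_k$ has multiplicity one. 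Thus $\drh(G,H)$ has profile $(1)$ with the claimed zero.

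The argument is essentially a routine calculus computation, and I expect no serious obstacle. The only genuine step is recognizing the factorization $1-2p+p^{k+1}=(p-1)f_k(p)$, which is exactly what makes the polynomial $f_k$ appear, together with confirming the combinatorial convention that $P_k$ has $k-1$ edges so that the exponents come out correctly.
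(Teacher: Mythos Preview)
Your proof is correct and follows essentially the same route as the paper: reduce $\drh$ to $\drho$ via the series-parallel observation, compute $\rho(P_k)=p^{k-1}$ and $\rho(P_{k+1}\|P_{k+1})=2p^k-p^{2k}$, factor the difference as $p^{k-1}(p-1)f_k(p)$, and conclude via $f_k(0)<0$, $f_k(1)>0$, and $f_k'>0$ on $(0,1)$. Your write-up just spells out a few steps (the telescoping factorization and the multiplicity check) that the paper leaves implicit.
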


\begin{proof} Since $G$ and $H$ are series-parallel, it follows that $$\drh(G,H)=\drho(G,H)=\rho(P_{k})-\rho(P_{k+1}||P_{k+1})=p^{k-1}-[2p^k-(p^k)^2]=p^{k-1}(p-1)f_k(p).$$ 
The profile of this function is the profile of $f_k$, which has a unique root $\gamma_k$ of multiplicity 1 in (0,1), since $f_k(0)=-1, f_k(1)=k-1>0$ and $f_k'$ is positive on $(0,1)$.
\end{proof}

The main idea to show that our original question has a negative answer in a much stronger sense is 
\begin{theorem}[Kelmans~\cite{Kel}, 4.1] \label{kelmans}
If $H_1=(F_1\circ G_1)||(G_2\circ F_2)$ and $H_2=(F_1\circ G_2)||(G_1\circ F_2)$, then $\drho(H_1,H_2)=\drho(F_1,F_2)\cdot \drho(G_1,G_2)$.
\end{theorem}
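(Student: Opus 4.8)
The plan is to reduce everything to the two reliability identities for the series and parallel operations recorded just after Proposition~\ref{series circuit}, namely $\rho(G_1\circ G_2)=\rho(G_1)\rho(G_2)$ and $\rho(G_1||G_2)=\rho(G_1)+\rho(G_2)-\rho(G_1)\rho(G_2)$. These hold for the two-terminal reliability $\rho$ of arbitrary two-terminal graphs, so I may apply them to the compound graphs $H_1,H_2$ no matter what $F_1,F_2,G_1,G_2$ are. To keep the bookkeeping light I would abbreviate $f_i=\rho(F_i,p)$ and $g_i=\rho(G_i,p)$ for $i=1,2$, suppressing $p$ throughout.

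First I would expand $\rho(H_1)$. By the series rule the two branches of the parallel composition have reliabilities $\rho(F_1\circ G_1)=f_1g_1$ and $\rho(G_2\circ F_2)=g_2f_2$, so the parallel rule gives $\rho(H_1)=f_1g_1+g_2f_2-f_1g_1g_2f_2$. The same computation applied to $H_2$ yields $\rho(H_2)=f_1g_2+g_1f_2-f_1g_2g_1f_2$.

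The one thing to notice---and really the only content of the argument---is that the two quartic product terms agree, $f_1g_1g_2f_2=f_1g_2g_1f_2$, because multiplication of these reliabilities is commutative. Hence they cancel in the difference, leaving $\drho(H_1,H_2)=\rho(H_1)-\rho(H_2)=f_1g_1+g_2f_2-f_1g_2-g_1f_2$. Grouping by the $f$'s, this equals $f_1(g_1-g_2)-f_2(g_1-g_2)=(f_1-f_2)(g_1-g_2)$, which is exactly $\drho(F_1,F_2)\cdot\drho(G_1,G_2)$.

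I do not expect any genuine obstacle here: once the series and parallel formulas are in hand the statement is a two-line algebraic identity, and the only ``trick'' is recognizing the cancellation of the degree-four terms. The interest lies not in the difficulty but in how this multiplicative factorization can be iterated---building $F_1,F_2$ or $G_1,G_2$ themselves from the same gadget---to manufacture $\Delta_\rho$-profiles with many prescribed zeroes, which I expect is the purpose for which this lemma is invoked.
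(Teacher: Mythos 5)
Your proof is correct. The paper does not actually prove Theorem~\ref{kelmans} at all---it is imported verbatim from Kelmans' paper with a citation---so there is no internal proof to compare against; your derivation (expand $\rho(H_1)$ and $\rho(H_2)$ by the series and parallel rules, note the quartic terms cancel, and factor the remainder as $(f_1-f_2)(g_1-g_2)$) is the standard argument, and your remark that the series/parallel reliability formulas hold for \emph{arbitrary} two-terminal graphs, not just series-parallel ones, is precisely the observation needed for the theorem to hold in the stated generality.
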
 

Observe that if $F_1,F_2,G_1,G_2$ are all series-parallel in this statement, then so are $H_1$ and $H_2$, and we also get $\drh(H_1,H_2)=\drh(F_1,F_2)\cdot \drh(G_1,G_2)$. We can view this pair $(H_1,H_2)$ as a natural composition $(F_1,F_2)*(G_1,G_2)$ of the pairs $(F_1,F_2)$ and $(G_1,G_2)$. Thus if we compose the pair $(P_k, P_{k+1}||P_{k+1})$ from Lemma~\ref{path-cycle} with itself $m_k$ times we get a pair of graphs $(G_k,H_k)$ with profile ($m_k$) for $\drho=\drh$, where the unique root $\gamma_k$ has multiplicity $m_k$. Since $0=f_k(\gamma_k)<f_{k+1}(\gamma_k)$ for all $k\ge 2$ it follows moreover that $\gamma_2>\gamma_3>\dots$. So if we take such pairs $(G_k, H_k)$ for $2\le k\le t+1$ and compose them with each other we get a pair $(G,H)$ with profile $(m_{t+1},m_t,\dots,m_3,m_2)$ and $m_k$ is the multiplicity of the root $\gamma_k$. Relabeling the subscripts now we have proved the following.

\begin{theorem}\label{GH crossing}
For every $t$-tuple of positive integers $(m_1,m_2,\dots,m_t)$ there are series-parallel graphs $H_1, H_2$ such that $\drho(H_1,H_2)=\drh(H_1,H_2)$ has profile $(m_1,m_2,\dots m_t)$.
\end{theorem}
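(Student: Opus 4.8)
The plan is to exploit the multiplicativity of Kelmans' composition (Theorem~\ref{kelmans}) on the difference function, using as the only building block the pair supplied by Lemma~\ref{path-cycle}. The key structural point is that when the four ingredient graphs $F_1,F_2,G_1,G_2$ are series-parallel, so are the composites $H_1,H_2$; since the CFP is finite on a series-parallel graph (Proposition~\ref{series circuit}), we have $\rh=\rho$ and hence $\drh=\drho$ throughout. Writing $(F_1,F_2)*(G_1,G_2)$ for the composition, Theorem~\ref{kelmans} then says that the difference function of a composite is the \emph{product} of the difference functions of its two parts, both for $\drho$ and (via $\drh=\drho$) for $\drh$. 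Thus realizing a prescribed profile reduces to realizing the matching product of simple polynomial factors.

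First I would fix, for each integer $k\ge 2$, the base pair $B_k=(P_k,P_{k+1}||P_{k+1})$. By Lemma~\ref{path-cycle} its difference function is $p^{k-1}(p-1)f_k(p)$, whose only zero in $(0,1)$ is the simple root $\gamma_k$ of $f_k$; that is, $B_k$ has profile $(1)$. Taking the $*$-product of $m$ copies of $B_k$ multiplies this difference function by itself $m$ times, yielding a series-parallel pair whose difference function has $\gamma_k$ as its unique zero in $(0,1)$, now of multiplicity $m$, so profile $(m)$. This gives, for any prescribed multiplicity, a series-parallel pair supported at the single location $\gamma_k$.

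Next I would combine these single-root pairs by composing them across distinct values of $k$, which multiplies their difference functions; since products of polynomials accumulate (never cancel) zeros, the zeros in $(0,1)$ of the result are exactly the $\gamma_k$ used, each with the multiplicity carried by its factor. The point requiring genuine verification is that the $\gamma_k$ are distinct and correctly ordered: from $f_{k+1}(p)=f_k(p)+p^{k+1}$ we get $f_{k+1}(\gamma_k)=\gamma_k^{k+1}>0$, and since $f_{k+1}$ is increasing on $(0,1)$ with $f_{k+1}(0)=-1<0$, its root $\gamma_{k+1}$ lies strictly below $\gamma_k$; hence $\gamma_2>\gamma_3>\cdots$. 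Taking the building values $k=2,3,\dots,t+1$ and assigning multiplicities accordingly therefore produces a composite with exactly $t$ distinct zeros in $(0,1)$; listing them in increasing order of location (which is decreasing order of $k$) and relabeling the indices so that the $i$-th smallest root receives $m_i$ gives a series-parallel pair $(H_1,H_2)$ with profile exactly $(m_1,\dots,m_t)$.

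I do not expect a serious obstacle, since the substance is carried by Theorem~\ref{kelmans} and Lemma~\ref{path-cycle}. The only points deserving care are the bookkeeping that profiles really concatenate under $*$ — the extra factors $p^{k-1}(p-1)$ contribute zeros only at the excluded endpoints $0$ and $1$, and distinct $\gamma_k$ cannot collide, so neither spurious zeros nor cancellations arise in $(0,1)$ — and keeping straight which multiplicity attaches to which $\gamma_k$ through the final relabeling.
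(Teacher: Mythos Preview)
Your proposal is correct and follows essentially the same approach as the paper: use Kelmans' composition $(F_1,F_2)*(G_1,G_2)$ to multiply difference functions, build multiplicity $m$ at $\gamma_k$ by composing the base pair $(P_k,P_{k+1}\|P_{k+1})$ with itself, verify $\gamma_2>\gamma_3>\cdots$ via $f_{k+1}(\gamma_k)>0$, then compose across $k=2,\dots,t+1$ and relabel. If anything, your write-up is slightly more careful than the paper's about why the extra factors $p^{k-1}(p-1)$ do not contribute zeros in $(0,1)$ and why no cancellations occur.
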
 

\section{Piecewise polynomial optimal reliability functions}\label{sec:piecewise}

As we observed in Section~\ref{sec:prob} $\rh(G,p)$ can be achieved by different protocols for different $p$, so that $\rh$ may be piecewise polynomial.
For us a {\em breakpoint of order $m$} in a piecewise polynomial function $f$ will be a $z\in(0,1)$ such that $f$ is differentiable $m-1$ times at $z$, but not $m$ times, where 0 times differentiable means continuous at $z$. Equivalently there are different polynomials $p_1,p_2$ and $\varepsilon>0$ such that $f(x)=p_1(x)$ for $x\in(z-\varepsilon,z]$ and $f(x)=p_2(x)$ for $x\in[z,z+\varepsilon)$, and $p_1(x)-p_2(x)$ has a zero of order $m$ at $x=z$. Observe that $\min\{p_1(x),p_2(x)\}$ has a breakpoint (of order $m$) at $z$ if and only if $p_1(x)-p_2(x)$ has a zero of {\em odd} order $m$ at $x=c$. 

\begin{theorem}\label{G crossing}
For every $t$-tuple of positive odd integers $(m_1,m_2,\dots,m_t)$  there is a graph $G$ so that $\rh(G,p)$ is a piecewise polynomial with exactly $t$ breakpoints $x_1<x_2<\dots <x_t$ in (0,1) such that $x_i$ has order $m_i$.
\end{theorem}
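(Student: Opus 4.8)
The plan is to realize $\rh(G,p)$ as the required piecewise polynomial by combining the discrepancy formula of Proposition~\ref{B0+} with the crossing construction of Theorem~\ref{GH crossing}, exploiting the $\min$/odd-order-zero criterion recalled just before the statement. First I would apply Theorem~\ref{GH crossing} to the given odd $t$-tuple $(m_1,\dots,m_t)$ to obtain series-parallel graphs $H_1,H_2$ for which $\drho(H_1,H_2)=\rho(H_1,p)-\rho(H_2,p)$ has profile $(m_1,\dots,m_t)$; equivalently it has exactly $t$ zeros $x_1<\dots<x_t$ in $(0,1)$, with $x_i$ of multiplicity $m_i$. I would then let $G$ be the graph built from $B_0$ by replacing the edges $s1,s2$ with $H_1,H_2$ as in Figure~\ref{fig:B0 replace s1,s2}, so that Proposition~\ref{B0+} applies with $p_i=\rho(H_i,p)$.

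Next I would expand $\rh(G,p)=\rho(G,p)-\dhat(G,p)$ using the discrepancy formula:
$$\rh(G,p)=\Bigl(\rho(G,p)+p^5(1-p)^3\,p_1p_2\Bigr)-p^5(1-p)^3\min\{p_1,p_2\}.$$
Here $\rho(G,p)$, $p_1=\rho(H_1,p)$ and $p_2=\rho(H_2,p)$ are honest polynomials in $p$, being two-terminal reliabilities for a fixed common edge-probability, so the parenthesized summand is a polynomial and contributes no breakpoints. Hence every breakpoint of $\rh(G,p)$ comes from the single term $-g(p)\min\{p_1,p_2\}$ with $g(p)=p^5(1-p)^3$, and adding the polynomial part afterwards alters neither the locations nor the orders of these breakpoints.

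To pin down those breakpoints, I would use that $g(p)>0$ on $(0,1)$, so $g\cdot\min\{p_1,p_2\}=\min\{gp_1,gp_2\}$, whose two local polynomial pieces differ by $g\cdot(p_1-p_2)=g\cdot\drho(H_1,H_2)$. By the $\min$/odd-order criterion, $\min\{gp_1,gp_2\}$ has a breakpoint of order $m$ exactly at each zero of odd order $m$ of $g\cdot\drho(H_1,H_2)$. Since $g$ never vanishes on $(0,1)$, multiplying by $g$ leaves the location and multiplicity of every zero of $\drho(H_1,H_2)$ unchanged; because all $m_i$ are odd, each of the $t$ zeros $x_1<\dots<x_t$ therefore yields a breakpoint of order exactly $m_i$, and there are no others. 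This gives the claimed profile, with the $x_i$ inherited directly from the profile of $\drho(H_1,H_2)$.

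The only step that needs real care, rather than bookkeeping, is this last invariance claim: that multiplication by the fixed nonvanishing factor $g$ and addition of the polynomial part preserve both the set of breakpoints and their orders. This is precisely where the hypothesis that the $m_i$ are odd enters, through the equivalence between breakpoints of a $\min$ and odd-order zeros of the difference, and where the positivity of $g$ on $(0,1)$ is used. Everything else is a direct assembly of Proposition~\ref{B0+} and Theorem~\ref{GH crossing}.
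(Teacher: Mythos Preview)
Your proposal is correct and follows essentially the same approach as the paper: obtain $H_1,H_2$ from Theorem~\ref{GH crossing}, plug them into Proposition~\ref{B0+}, and read off the breakpoints of $\rh(G,p)$ from the $\min\{p_1,p_2\}$ term via the odd-order-zero criterion. Your write-up is in fact more detailed than the paper's terse argument, making explicit the passage from $\dhat$ to $\rh=\rho-\dhat$ and the invariance of breakpoint locations and orders under multiplication by the nonvanishing factor $g(p)=p^5(1-p)^3$ and addition of a polynomial.
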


\begin{proof}
Let $(H_1,H_2)$ be the series parallel graphs with profile $(m_1,m_2,\dots,m_t)$ for $\drho$ obtained from Theorem~\ref{GH crossing}.
Expanding $B_0$ at $s1$, $s2$ by $H_1$, $H_2$ as in Proposition~\ref{B0+} we obtain a graph $G$ with $\dhat(G,p)=p^5(1-p)^3(\min\{p_1,p_2\}-p_1p_2)$ where $p_i=\rho(H_i,p)$. This function has a breakpoint of order $m$ if and only if $p_1-p_2$ has a zero of multiplicity $m$, where $m$ is odd.
\end{proof}

\begin{remark}
Every breakpoint is an algebraic number, so not every number in (0,1) is a breakpoint of some $\rh(G,p)$, but by using suitable graphs of the form $P_{i_1}||P_{i_2}||\cdots|| P_{i_k}$ for $H_1, H_2$ in Theorem~\ref{G crossing} it should be possible to prove that the set of breakpoints is dense in (0,1). 
\end{remark}

The next theorem gives infinitely many small intervals that do not contain breakpoints for $\rh(G,p)$. 

\begin{theorem}\label{break points}
 For any rational $\frac ab\in[0,1]$,  $\rh(G,p)$ has no breakpoint $p$ with $0<|p-\frac ab|< (3b)^{-|E(G)|}$.

\end{theorem}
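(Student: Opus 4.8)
The plan is to show that every breakpoint $z$ of $\rh(G,p)$ is a root of an integer polynomial of controlled size and then to prove a separation estimate of $z$ from $\frac ab$. Write $n=|E(G)|$. At a breakpoint the upper envelope $\rh=\max\{\rho'_A:A\text{ a finite SPFP}\}$ switches from one piece $\rho'_A$ to another $\rho'_B$, so $q:=\rho'_A-\rho'_B$ is a nonzero polynomial with $q(z)=0$. Since $\rho'_A(p)=\sum_{i=0}^n N_i^A\,p^i(1-p)^{n-i}$, where $N_i^A$ counts the edge-sets of size $i$ containing an $A$-path, I can write $q(p)=\sum_{i=0}^n D_i\,p^i(1-p)^{n-i}$ with $D_i=N_i^A-N_i^B\in\mathbb Z$ and $|D_i|\le\binom ni$. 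First I would record two consequences of this Bernstein-type form. Expanding into the monomial basis and using $\sum_m\binom{n-i}{m-i}=2^{n-i}$ gives $\|q\|_1:=\sum_\mu|[p^\mu]q|\le\sum_i|D_i|2^{n-i}\le\sum_i\binom ni2^{n-i}=3^n$, which is exactly where the constant $3$ is born. The standard Bernstein derivative identity $q'=n\sum_j(\beta_{j+1}-\beta_j)B_{j,n-1}$, with $\beta_i=D_i/\binom ni\in[-1,1]$, gives $\sup_{[0,1]}|q'|\le 2n$.

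Before the main estimate I would make two harmless reductions. Replacing $\frac ab$ by its lowest-terms form only shrinks the punctured interval, so I may assume $\gcd(a,b)=1$; and since $p\mapsto 1-p$ carries $q$ to a polynomial of the same form (the $D_i$ reindexed) and $\frac ab$ to $\frac{b-a}b$, I may assume $0\le\frac ab<1$, whence $b-a\ge 1$. The argument then splits on whether $q$ vanishes at $\frac ab$. If $q(\tfrac ab)\ne 0$, then $b^n q(\tfrac ab)=\sum_i D_i a^i(b-a)^{n-i}$ is a nonzero integer, so $|q(\tfrac ab)|\ge b^{-n}$; applying the mean value theorem on the segment from $\frac ab$ to $z$ (both in $[0,1]$) together with $\sup|q'|\le 2n$ yields $|z-\tfrac ab|\ge |q(\tfrac ab)|/(2n)\ge b^{-n}/(2n)\ge(3b)^{-n}$, the last step because $3^n\ge 2n$.

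If $q(\tfrac ab)=0$, then since $\gcd(a,b)=1$ Gauss's lemma lets me factor $q=(bx-a)^k u$ with $k\ge 1$, $u\in\mathbb Z[x]$, $u(\tfrac ab)\ne 0$ and $\deg u\le n-1$; the breakpoint $z\ne\frac ab$ is a root of $u$. The quantitative heart is that dividing by $(bx-a)$ does not increase the $\ell^1$-norm: writing $q=(bx-a)h$ gives $q_m=b\,h_{m-1}-a\,h_m$, and summing $b|h_{m-1}|\le|q_m|+a|h_m|$ over $m$ gives $b\|h\|_1\le\|q\|_1+a\|h\|_1$, i.e.\ $(b-a)\|h\|_1\le\|q\|_1$, so $\|h\|_1\le\|q\|_1$ as $b-a\ge 1$; iterating, $\|u\|_1\le\|q\|_1\le 3^n$. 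Now translate: $U(w):=b^{\deg u}u(\tfrac{w+a}b)\in\mathbb Z[x]$ has $|U(0)|\ge 1$ and, using $1+a\le b$, satisfies $\|U\|_1\le b^{\deg u}\|u\|_1\le b^{n-1}3^n$. Since $w_z:=bz-a$ is a nonzero root of $U$, the relation $|U(0)|\le\sum_{m\ge1}|[w^m]U|\,|w_z|^m\le|w_z|\,\|U\|_1$ (valid when $|w_z|\le 1$; if $|w_z|>1$ then already $|z-\tfrac ab|>1/b\ge(3b)^{-n}$) gives $|w_z|\ge 1/\|U\|_1$. Hence $|z-\tfrac ab|=|w_z|/b\ge 1/(b^{\deg u+1}3^n)\ge 1/(b^n3^n)=(3b)^{-n}$, where $\deg u\le n-1$ is precisely what compensates the division by $b$.

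The main obstacle is exactly the case $q(\tfrac ab)=0$: a naive Cauchy-type bound applied to $q$ itself loses a factor of $b$ and yields only $(3b)^{-(n+1)}$. Recovering the stated $(3b)^{-n}$ requires both the $\ell^1$-contraction lemma, which keeps $\|u\|_1\le 3^n$ after factoring out $(bx-a)^k$, and the observation that a genuine root at $\frac ab$ forces $\deg u\le n-1$, which restores the lost factor of $b$. I expect the bookkeeping of these two competing powers of $b$ to be the delicate point, while the remaining ingredients (the $3^n$ bound on $\|q\|_1$ and the $2n$ bound on $\sup_{[0,1]}|q'|$) are routine consequences of the Bernstein form.
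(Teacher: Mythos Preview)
Your argument is correct. Both proofs rest on the same two observations—the Bernstein representation $q(p)=\sum_i D_i\,p^i(1-p)^{n-i}$ with $|D_i|\le\binom ni$ producing the factor $3^n$, and rationality of $\tfrac ab$ producing the factor $b^{-n}$—but you organize the separation estimate differently from the paper. The paper makes a single substitution $x=p-\tfrac ab$, expands to obtain $f(x)=\sum_r c_r x^r$ with $|c_r|<3^n$ and (crucially) $c_r\in\tfrac1{b^{\,n-r}}\mathbb Z$, and then applies Cauchy's classical root bound to the reciprocal polynomial $x^{\deg f}f(1/x)$; this handles the cases $q(\tfrac ab)=0$ and $q(\tfrac ab)\neq 0$ uniformly, since a zero constant term simply shifts which $c_\beta$ serves as the leading coefficient of the reciprocal. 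Your route instead splits cases: when $q(\tfrac ab)\neq 0$ you use the mean value theorem together with the Bernstein derivative bound $\sup_{[0,1]}|q'|\le 2n$, which is a pleasantly elementary replacement for Cauchy's bound; when $q(\tfrac ab)=0$ you invoke Gauss's lemma to factor $(bx-a)^k$ out over $\mathbb Z$, prove the neat $\ell^1$-contraction $\|h\|_1\le\|q\|_1$ for $q=(bx-a)h$, and only then translate. The paper's approach is shorter and avoids the case distinction; yours uses more elementary tools and makes explicit why the degree drop $\deg u\le n-1$ exactly compensates the extra factor of $b$ from the final division—an accounting that in the paper is implicit in the denominator $b^{\,n-r}$ rather than $b^n$.
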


\begin{proof}
Let $m=|E(G)|$ and $c=\frac ab$.
It is sufficient to show that if $A,A'$ are finite protocols with $\rho_A(G,p)\neq\rho_{A'}(G,p)$, then the polynomial $f(x)=\rho_A(G,x+c)-\rho_{A'}(G,x+c)$ has no zero $x$ with $|x|<(3b)^{-m}$. Observe that for every protocol $A$ we can write $\rho_A(G,p)=\sum_{i=0}^ma_ip^i(1-p)^{m-i}$ where $a_i$ is the number of sets on $i$ edges that contain an $A$-walk. Thus $0\le a_i\le\binom mi$ and it follows that for some $b_i$ with $|b_i|\leq \binom mi$
\begin{align*}f(x)&=\sum_{i=0}^mb_i(x+c)^i(1-c-x)^{m-i}=\sum_{i=0}^m\sum_{j=0}^{i}\sum_{\ell=0}^{m-i}b_i\binom {i}jx^jc^{i-j}\binom{m-i}\ell(-x)^\ell(1-c)^{m-i-\ell}\\
&=\sum_{r=0}^m\sum_{j=0}^r\sum_{i=j}^{m-r+j}x^rb_i\binom {i}jc^{i-j}\binom{m-i}{r-j}(-1)^{r-j}(1-c)^{m-r-(i-j)}\\
&=\sum_{r=0}^mx^r\sum_{j=0}^r\sum_{s=0}^{m-r}(-1)^{r-j}b_{s+j}\binom {s+j}j\binom{m-s-j}{r-j}c^{s}(1-c)^{m-r-s}=\sum_{r=0}^mc_rx^r,
\end{align*}
where we used the substitutions $r=\ell+j$ and $s=i-j$. Observe that $1-c=\frac{b-a}b$ so that $c_r$ is rational with denominator $b^{m-r}\le b^m$ and we have that $|c_r|\ge b^{-m}$ unless $c_r=0$. Thus
\begin{align*}|c_r|&\le\sum_{j=0}^r\sum_{s=0}^{m-r}|b_{s+j}|\binom {s+j}j\binom{m-s-j}{r-j}c^{s}(1-c)^{m-r-s}\\
&\le\sum_{j=0}^r\sum_{s=0}^{m-r}\binom m{s+j}\binom {s+j}j \binom{m-s-j}{r-j}c^{s}(1-c)^{m-r-s}\\
&=\sum_{j=0}^r\sum_{s=0}^{m-r}\binom mr \binom {r}j\binom{m-r}{s}c^{s}(1-c)^{m-r-s}\\
&=\sum_{j=0}^r\binom mr \binom {r}j(c+1-c)^{m-r}=\binom mr2^r<\sum_{r=0}^m\binom mr 2^r1^{m-r}=3^m.
\end{align*}

If the degree of $f$ is $\alpha$, then $g(x)=x^\alpha f(\frac 1x)$ is a polynomial with leading coefficient $c_\beta\neq 0$, so by Cauchy's bound~(\cite{Cau}~p122) every zero $z$ of $g$ satisfies $|z|\le 1+\max\{|c_r/c_\beta|: 0\le r\le m\}\le 3^m/(1/b^m)=(3b)^m$ and thus the nonzero roots of $f$ are bounded below by $(3b)^{-m}$. 
\end{proof}

\section{Reliable protocols for probabilities near zero}
Determining $\rh(G,p)$ for all values of $p$ appears to be a difficult problem, however there is something we can say for $p$ close to zero.
There must be a protocol $A_0$ and an $\varepsilon>0$ such that $\rh(G,p)=\rho_{A_0}(G,p)$ for all $p\in[0,\varepsilon)$, and we call this the optimal protocol near zero. Near zero a good protocol $A$ will have many short $A$-paths. Consider 
$A^{(m)}=\{uvw: uvw$ is contained in some \srp~of length $\le m\}$ and let $d(u,v)$ denote the distance between $u$ and $v$ in $G$.

\begin{theorem}\label{near zero}
If $k=d(s,r)$ and $d_i$ is the number of \srp s of length $i$, then $A_0\supseteq A^{(k+1)}$ and $\rh(G,p)=\rho_{A_0}(G,p)=d_kp^k+ d_{k+1}p^{k+1}+O(p^{k+2})$ for all $p<3^{-m}$, where $m=|E(G)|$.
\end{theorem}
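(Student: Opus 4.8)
The plan is to reduce the statement to two ingredients: a low-order expansion of $\rho_A$ valid for any finite SPFP $A$, and the finiteness of the protocol $A^{(k+1)}$. The existence of a single optimal protocol $A_0$ near $0$ is already recorded in the paragraph preceding the statement; to see that one $A_0$ works on the whole range $[0,3^{-m})$ I would invoke Theorem~\ref{break points} with $\frac ab=0$ (so $b=1$): it gives that $\rh(G,\cdot)$ has no breakpoint in $(0,3^{-m})$, hence coincides there with a single polynomial, which must be $\rho_{A_0}$ since the two agree on a neighborhood of $0$. By Proposition~\ref{PFP} I may take $A_0$ to be a finite SPFP, so $\rho_{A_0}=\rho'_{A_0}$.

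For the expansion, let $A$ be any finite SPFP with $A$-paths $P_1,\dots,P_\ell$, and let $d_i^A$ be the number of these of length $i$. Writing $Y_i$ for the event that all edges of $P_i$ survive, inclusion--exclusion gives $\rho_A=\sum_i p^{|E(P_i)|}-\sum_{i<j}p^{|E(P_i)\cup E(P_j)|}+\cdots$. The combinatorial point is that for distinct $s,r$-paths $P_i\neq P_j$ one has $|E(P_i)\cup E(P_j)|\ge k+1$, and in fact never $=k+1$: the symmetric difference $E(P_i)\triangle E(P_j)$ is nonempty and every vertex has even degree in it (its degree has the same parity as the sum of the two path-degrees, which is even both at $s,r$, where each path contributes $1$, and elsewhere, where each contributes $0$ or $2$), so it is a disjoint union of cycles and has at least $3$ edges; together with $|E(P_i)|,|E(P_j)|\ge k$ this excludes a union of size exactly $k+1$. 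Hence every inclusion--exclusion term past the first sum is $O(p^{k+2})$, and since no $s,r$-path is shorter than $k$, $\rho_A=d_k^Ap^k+d_{k+1}^Ap^{k+1}+O(p^{k+2})$.

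The main obstacle is proving that $A^{(k+1)}$ is finite, and here I would use the potential $\delta(v)=d(s,v)$. On an $s,r$-path of length $L\le k+1$ traversed from $s$ to $r$, each step changes $\delta$ by $+1,0,$ or $-1$; if $a,b,c$ count these, then $a+b+c=L$ and $a-c=\delta(r)-\delta(s)=k$, whence $b+2c=L-k\le 1$, forcing $c=0$ and $b\le 1$. Thus $\delta$ is non-decreasing along such a path and is flat at most once. Now suppose $A^{(k+1)}$ had an essential circuit $C=v_0,\dots,v_j=v_0$. Every instruction $v_{i-1}v_iv_{i+1}$ lies on a path of length $\le k+1$ in its forward orientation, so each step of $C$ is $\delta$-non-decreasing; since $C$ is closed, $\delta$ is constant along $C$. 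But then both edges of the instruction $v_{i-1}v_iv_{i+1}$ are flat steps on its path, giving two flat steps on a path of length $\le k+1$ and contradicting $b\le 1$. Hence $A^{(k+1)}$ has no essential circuit and, being a strongly essential PFP, is a finite SPFP.

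Finally, $A^{(k+1)}$ contains every instruction on a path of length $\le k+1$, so each such path is an $A^{(k+1)}$-path and $d_k^{A^{(k+1)}}=d_k$, $d_{k+1}^{A^{(k+1)}}=d_{k+1}$. Being finite, $\rho_{A^{(k+1)}}\le\rh=\rho_{A_0}$ near $0$, so $\rho_{A_0}-\rho_{A^{(k+1)}}=(d_k^{A_0}-d_k)p^k+(d_{k+1}^{A_0}-d_{k+1})p^{k+1}+O(p^{k+2})\ge 0$ for small $p>0$; since also $d_i^{A_0}\le d_i$, comparing coefficients forces $d_k^{A_0}=d_k$ and then $d_{k+1}^{A_0}=d_{k+1}$. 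Equality of these counts means that all $s,r$-paths of length $\le k+1$ are $A_0$-paths, so every instruction on such a path lies in $A_0$, i.e. $A_0\supseteq A^{(k+1)}$; and the expansion of $\rho_{A_0}$ then reads $\rh=\rho_{A_0}=d_kp^k+d_{k+1}p^{k+1}+O(p^{k+2})$ on $[0,3^{-m})$, as claimed.
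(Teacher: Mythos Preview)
Your argument is correct. The overall architecture matches the paper's: reduce to a single polynomial on $(0,3^{-m})$ via Theorem~\ref{break points} at $\tfrac ab=\tfrac01$, show $A^{(k+1)}$ is finite using the potential $\delta(v)=d(s,v)$, and then compare $A_0$ to $A^{(k+1)}$ to force $A_0\supseteq A^{(k+1)}$ and read off the first two coefficients.

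Where you genuinely diverge is in how you extract the expansion. The paper works in the basis $p^iq^{m-i}$, letting $a_i$ count $i$-edge sets that contain an $A$-walk, and then observes directly that $a_k=d_k$ and $a_{k+1}=d_{k+1}+(m-k)d_k$; expanding $(1-p)^{m-i}$ gives the stated asymptotics. You instead run inclusion--exclusion over the $A$-paths and kill all higher intersection terms with the neat observation that $E(P_i)\triangle E(P_j)$ is Eulerian, hence has at least three edges, forcing $|E(P_i)\cup E(P_j)|\ge k+2$. Your route avoids computing $a_{k+1}$ explicitly and isolates the combinatorial reason the $p^{k+1}$ coefficient is exactly $d_{k+1}$; the paper's route has the advantage that it makes the subset counts $a_i$ available for further refinement (e.g., it is the same framework used near $p=1$ in the next section). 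For finiteness of $A^{(k+1)}$ the paper argues slightly more directly: it shows $d(s,u)<d(s,w)$ for every instruction $uvw\in A^{(k+1)}$ by a path–shortening contradiction, which immediately precludes a closed trail; your version with the $(a,b,c)$ bookkeeping and ``at most one flat step'' reaches the same conclusion by a short detour.
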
 

\begin{proof} Observe that if we let $q=1-p$, and $G$ has $m$ edges, then for every protocol $A$ we can let $\rho_A(G,p)=\sum_{i=0}^ma_ip^iq^{m-i}$ where $a_i$ is the number of sets on $i$ edges that contains an $A$-walk. Observe that $a_i=0$ for all $i<k$ and $a_k\le d_k$, since every $A$-walk contains an \srp. The optimal protocol near zero must have $a_k$ as large as possible (as $p^kq^{m-k}$ is the dominant term), and subject to that it must have $a_{k+1}$ as large as possible (and so on.) Since every $A$-walk on a set of at most $k+1$ edges must contain an \srp~of length at most $k+1$ we conclude that $A_0\supseteq A^{(k+1)}$ if the latter is a finite protocol. If this the case, then we see that for $A_0$ we have $a_k=d_k$ (as every \srp~of length $k$ is now an $A_0$-walk) and $a_{k+1}=d_{k+1}+(m-k)d_k$ since every set of $k+1$ edges containing an \srw~must either be an \srp~of length $k+1$ or an \srp~of length $k$ and one additional edge. Thus
\begin{align*}
\rho_{A_0}(G,p)
&=d_kp^k(1-p)^{m-k}+(d_{k+1}+(m-k)d_k)p^{k+1}(1-p)^{m-k-1}+O(p^{k+2})\\
&=d_kp^k+d_{k+1}p^{k+1}+O(p^{k+2}).\\
\end{align*}
The bound of $3^{-m}$ follows from Theorem~\ref{break points} with $\frac ab=\frac 01$. It remains to see that $A^{(k+1)}$ is finite. This will follow if we can show that for all $uvw\in A^{(k+1)}$ we have $d(s,u)<d(s,w)$ since the distance from the vertices in an essential circuit to $s$ would have to increase for every two steps along the circuit, but eventually we will repeat a vertex as we continue along the circuit. So suppose that $d(s,u)\ge d(s,w)$ and let $Q$ be a path of length at most $k+1$ that contains the instruction $uvw$. The length of the $s,w$ segment of $Q$ is at least $d(s,u)+2$ and so if we replace this segment by a shortest $s,w$-path, then we get an \srw~of length at most $(k+1)-(d(s,u)+2)+d(s,w)\le k-1$, a contradiction.
\end{proof}

\section{Reliable protocols for probabilities near one}
As we observed previously, for every protocol $A$ we can let $\rho_A(G,p)=\sum_{i=0}^ma_ip^iq^{m-i}$ where $a_i$ is the number of sets on $i$ edges that contains an $A$-walk. The optimal protocol near one must have $a_m$ as large as possible (as $p^mq^{0}$ is the dominant term), and subject to that it must have $a_{m-1}$ as large as possible (and so on.) If $c_k$ is the number of edge-sets of size $k$ that is contained in some edge-cut that disconnects $s$ from $r$, then clearly $a_i\le \binom{m}{i}-c_{m-i}$. It is the main result of~\cite{KPR} that if the size of a smallest edge-cut separating $s$ and $r$ is $e$, then there is a finite protocol $A'$ such that there is an $A'$-walk in $G$ unless at least $e+1$ edges fail or the $e$ edges of a minimum cut separating $s$ and $r$ fail. So if we let $A_1$ be the optimal finite protocol near one then for $A_1$ we get $a_i=\binom mi$ when $i>m-e$, and $a_{m-e}=\binom m{m-e} - c_{e}$, where $c_e$ simply counts the number of edge-cuts of minimum size $e$ that separate $s$ from $r$. It now follows that 
\begin{align*}
\rho_{A_1}(G,p)
&=\sum_{i=0}^ma_ip^iq^{m-i}=\sum_{i=m-e}^m\binom mi p^iq^{m-i} -c_{e}(1-q)^{m-e}q^e+O(q^{e+1})\\
&=\sum_{i=0}^m\binom mi p^iq^{m-i} -c_{e}q^e+O(q^{e+1})=(p+q)^m -c_{e}q^e+O(q^{e+1})=1 -c_{e}q^e+O(q^{e+1}).\\
\end{align*}
This proves the following counterpart to Theorem~\ref{near zero} for the optimal protocol for probabilities near one.
\begin{theorem}\label{near one}
$\rh(G,p)=\rho_{A_1}(G,p)=1-c_eq^e+O(q^{e+1})$ for $q<3^{-m}$.
\end{theorem}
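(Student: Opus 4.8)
\textbf{Proof plan for Theorem~\ref{near one}.}
My plan is to establish the claimed asymptotic expansion by identifying, coefficient by coefficient in descending order of $i$, the edge-set counts $a_i$ realized by the optimal finite protocol near one, and then to assemble these into the stated form $1-c_eq^e+O(q^{e+1})$. The whole argument rests on the expansion $\rho_A(G,p)=\sum_{i=0}^m a_i p^i q^{m-i}$ that is already recorded in the text, where $a_i$ counts the $i$-edge subsets supporting an $A$-walk from $s$ to $r$. Since $p$ is near one, $q=1-p$ is near zero, and the terms are ordered by increasing power of $q$: the dominant contribution comes from $a_m$, the next from $a_{m-1}$, and so on. Thus the optimal protocol near one is exactly the one that lexicographically maximizes the sequence $(a_m, a_{m-1}, \dots)$, and my first step is to make this lexicographic-optimality principle precise, in direct parallel with the argument for Theorem~\ref{near zero}.

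The crux is to determine which of the upper bounds $a_i\le\binom mi - c_{m-i}$ are achieved by a single finite protocol, where $c_k$ counts the $k$-edge subsets that lie inside some $s,r$-disconnecting cut. The trivial direction is the upper bound itself: any edge-set $S$ that supports an $A$-walk cannot contain all edges of an $s,r$-cut, since deleting a cut separates $s$ from $r$ and no walk can cross. The substantive direction is the matching lower bound, and here I would invoke the main result of~\cite{KPR}: if $e$ is the size of a minimum $s,r$-cut, there is a single finite protocol $A'$ such that an $A'$-walk survives whenever fewer than $e+1$ edges fail, and also whenever the failing set is not exactly the edges of a minimum cut. This immediately yields, for the optimal $A_1$, that $a_i=\binom mi$ for all $i>m-e$ (fewer than $e$ failures can never disconnect, so every such edge-set supports a walk), and that $a_{m-e}=\binom m{m-e}-c_e$, where at the critical level $m-e$ the only obstructed edge-sets are the complements of the minimum cuts themselves, of which there are exactly $c_e$.

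With these coefficients in hand, the remaining step is purely computational and is already carried out in the display preceding the theorem: substitute $a_i=\binom mi$ for $i>m-e$ and $a_{m-e}=\binom m{m-e}-c_e$, group the full binomial sum back into $(p+q)^m=1$, peel off the single $-c_e(1-q)^{m-e}q^e$ correction, expand $(1-q)^{m-e}=1+O(q)$, and absorb everything of order $q^{e+1}$ or higher into the error term. This gives $\rho_{A_1}(G,p)=1-c_eq^e+O(q^{e+1})$. The validity range $q<3^{-m}$ is not part of the asymptotics per se but guarantees that $A_1$ is genuinely optimal on a whole interval rather than merely to leading order: it follows from Theorem~\ref{break points} applied with $\frac ab=\frac 11$, which shows $\rh(G,p)$ has no breakpoint within $3^{-m}$ of $p=1$, so a single protocol is optimal throughout $(1-3^{-m},1)$.

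The main obstacle I anticipate is the lower-bound half of the coefficient computation, specifically the claim that a \emph{single} finite protocol simultaneously achieves all the maximal $a_i$ for $i\ge m-e$. It is easy to find, for each individual threshold, a protocol covering the edge-sets at that level, but the content of~\cite{KPR} is precisely that one robust finite protocol handles all sub-cut failure patterns at once; I would lean on that result rather than reconstructing it. A secondary subtlety is verifying that at level $m-e$ the obstructed edge-sets are \emph{exactly} the minimum-cut complements and nothing more—that is, that $A_1$ loses no further $(m-e)$-edge sets beyond the unavoidable $c_e$ many—which again is exactly the guarantee supplied by the cited robustness theorem, and this is where I would be most careful to quote its statement faithfully.
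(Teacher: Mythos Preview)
Your proposal is correct and follows essentially the same approach as the paper: the same coefficient expansion $\rho_A=\sum a_ip^iq^{m-i}$, the same lexicographic optimization of $(a_m,a_{m-1},\dots)$, the same invocation of the main result of~\cite{KPR} to pin down $a_i$ for $i\ge m-e$, and the same binomial regrouping to reach $1-c_eq^e+O(q^{e+1})$. Your explicit appeal to Theorem~\ref{break points} with $\tfrac ab=\tfrac11$ for the validity range $q<3^{-m}$ is exactly the intended counterpart to the $\tfrac ab=\tfrac01$ argument in Theorem~\ref{near zero}, which the paper leaves implicit.
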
 

To improve on this result in general would require a better understanding of the notion of robustness studied in~\cite{KPR}. 
A protocol $A$ is called {\em $k$-robust} if it is finite, and for every set $E$ of at most $k$-edges that does not disconnect $s$ from $r$, there is an $A$-walk in $G-E$. In a $k$-robust protocol we have $a_{m-i}=\binom mi -c_{i}$ for all $i\le k$, and the optimum protocol $A_1$ near one must have maximum robustness. Thus studying the properties of $A_1$ can be viewed as a refinement of the approach in~\cite{KPR}.

\section{Open problems}

Computing $\rh(G,p)$ exactly is likely to be a very hard problem in general, since Provan and Ball~\cite{PB} showed that even computing $\rho(G,p)$ is \#P-hard. 

The most interesting open question is clearly to characterize the graphs for which $\rh(G,p)$ is a polynomial. In~\cite{Pat} it is shown that the 10 minor-minimal graphs $G$ for which $\rh(G,p)<\rho(G,p)$ all have the property that $\rh(G,p)$ is polynomial, so such a characterization could be quite difficult to obtain. One point of inquiry could be to find all graphs for which $\rh(G,p)=\rho_{A^*-I}(G,p)$ where $I$ is a single instruction.

Can we give a polynomial time procedure for determining $\rh(G,p)$ exactly for a fixed $p$, or near 0 or near 1?  

\section*{Acknowledgements}

The first author thanks Johan Rosenkilde for a helpful discussion that led to Theorem~\ref{break points}.

\bibliographystyle{siam}
\bibliography{ProtocolReliability-arxiv}
\end{document}